\newtheorem{theorem}{Theorem}[section]
\newtheorem{lemma}[theorem]{Lemma}
\newtheorem{question}{Question}
\numberwithin{equation}{section}
\newenvironment{proof}[1][Proof]{\begin{trivlist}
\item[\hskip \labelsep {\bfseries #1}]}{\end{trivlist}}
\newcommand{\qed}{\hfill \ensuremath{\Box}}
\newcommand{\Q}{\mathbb{Q}}
\newcommand{\R}{\mathbb{R}}
\newcommand{\Z}{\mathbb{Z}}
\begin{document}

\thispagestyle{empty}
\title{\textbf{Embedding Euclidean Distance Graphs in $\mathbb{R}^n$ and $\mathbb{Q}^n$}}

\author{\textbf{Matt Noble}\\
Department of Mathematics and Statistics\\
Middle Georgia State University\\
matthew.noble@mga.edu}
\date{}
\maketitle

\begin{abstract}

For $S \subseteq \R$, positive integer $n$, and $d > 0$, let $G(S^n, d)$ be the graph whose vertex set is $S^n$ where any two vertices are adjacent if and only if they are Euclidean distance $d$ apart.  The primary question we will consider in our work is as follows.  Given $n$ and distance $d$ actually realized as a distance between points of the rational space $\Q^n$, does there exist a finite graph $G$ that appears as a subgraph of $G(\Q^n, d)$ but not as a subgraph of $G(\R^{n-1}, 1)$?  We answer this question affirmatively for $n \leq 5$, and along the way, resolve a few related questions as well.\\
\vspace{3pt}

\noindent \textbf{Keywords and phrases:} Euclidean distance graph, graph dimension, graph embedding, rational points
\end{abstract}

\section{Definitions}

Throughout, we will designate by $\R$, $\Q$, and $\Z$ the rings of real numbers, rational numbers, and integers, respectively, with each of these spaces coming equipped with the standard Euclidean distance metric.  The structure most fundamental to our work will be that of the \textit{Euclidean distance graph}.  For $S \subseteq \R$, positive integer $n$, and $d > 0$, let $G(S^n, d)$ be the graph whose vertex set is $S^n$, with any two vertices $x,y \in S^n$ being adjacent if and only if $x, y$ are distance $d$ apart.  If such a graph has vertex set $\R^n$, in all circumstances, $d$ is taken to be 1, as $G(\R^n, d)$ and $G(\R^n, 1)$ are isomorphic by an obvious scaling argument.  However, when $S$ is a proper subset of $\R$, like say, $S = \Q$, the selection of $d$ matters, as the graphs $G(\Q^n, d_1)$ and $G(\Q^n, d_2)$ may not be isomorphic when $d_1$ and $d_2$ are not rational multiples of each other.  

If $f$ is any graph parameter that can be defined on $G(S^n, d)$, we will write $f(S^n, d)$ instead of the unwieldy $f(G(S^n, d))$.  We make special note of two parameters that will play a central role in our discussion.  Let $G$ be any graph.  The \textit{clique number} $\omega(G)$ is equal to the maximum $m$ such that the complete graph $K_m$ appears as a subgraph of $G$.  Considered in our current setting, $\omega(S^n, d)$ is the largest possible number of points of $S^n$ that comprise the vertices of a regular simplex of edge-length $d$.  Originally given in \cite{eht}, the \textit{dimension} of $G$, denoted $\dim(G)$, is equal to the minimum $n$ such that $G$ appears as a subgraph of $G(\R^n, 1)$.  In the various literature on the subject (see \cite{soifer} for an expansive history), if $G$ is indeed a subgraph of $G(\R^n, 1)$, $G$ is said to be a \textit{unit-distance graph} in $\R^n$ or to have a \textit{unit-distance representation} or \textit{unit-distance embedding} in $\R^n$.  If the value $n$ is not stipulated -- that is, $G$ is just referred to as being a ``unit-distance graph" with no other specification given -- it is assumed that $\dim(G) \leq 2$.

\section{Introduction (and a Brief Interlude)}

We give in this section a brief (and perhaps amusing) digression detailing the genesis of our current work.  Those who desire to see only the main result of the article may feel free to skip ahead to the next section.

In March of 2019, the author attended the SEICCGTC held at Florida Atlantic University in Boca Raton, and between talks, chatted with Pete Johnson, discussing mathematics, life, and the current goings-on at the author's alma mater, Auburn University.  It came to knowledge that Allison Lab, a somewhat outdated building on campus, had been slated for demolition.  Pete had held the same office in the bowels of Allison for many years, and had been given a few months to relocate to a new building all the files, papers, and other sundries that one accumulates over decades of mathematical activity.  In attempting to decide what to keep and what to throw away, Pete had unearthed a number of problems, each of which at some nebulous point in the past had been quickly jotted down on a stray scrap of paper.  One of which was the following.

\begin{center} ``Let $G$ be a unit-distance graph.  Is $G$ guaranteed to be realized as a subgraph of some regular unit-distance graph $H$?"	
\end{center}

\noindent The author was able to supply an affirmative answer to this question, and it is given as Theorem \ref{regulartheorem1} below.

\begin{theorem} \label{regulartheorem1} Let $G$ be a unit-distance graph.  There exists a regular unit-distance graph $H$ such that $G$ is a subgraph of $H$.
\end{theorem}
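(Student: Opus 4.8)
My approach rests on a single observation: a unit-distance graph is merely a \emph{subgraph} of $G(\R^2,1)$, so only its edges need be realized by pairs of points at distance $1$, and nothing at all is required of the non-edges. Consequently, starting from any unit-distance representation of $G$, I am free to attach new vertices anywhere on the unit circle about an already-placed vertex and to declare which of the resulting distance-$1$ pairs count as edges. I would use this freedom to enlarge $G$ to an \emph{infinite} $\Delta$-regular unit-distance graph, where $\Delta = \Delta(G)$ is the maximum degree.

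Concretely: fix a unit-distance embedding of $G$ in $\R^2$, and then run a breadth-first ``saturation'' process. At each stage only finitely many points have been placed; for every placed vertex $v$ of current degree $d < \Delta$, introduce $\Delta - d$ new vertices, place each at distance $1$ from $v$ in a direction avoiding all previously placed points, and join each to $v$ by an edge. Carrying this out through the successive generations produces a countable graph $H$ together with an injective map $V(H) \to \R^2$ under which every edge has length $1$; hence $H$ is a unit-distance graph, and $G$ is a subgraph of it.

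It remains to see that $H$ is regular. An original vertex $v$ of $G$ is processed exactly once, after which $\deg_H(v) = \deg_G(v) + (\Delta - \deg_G(v)) = \Delta$, and it is never revisited. A newly created vertex $w$ arrives with degree $1$ (the edge to the vertex that spawned it) and is then itself saturated by the attachment of $\Delta - 1$ further pendants, after which $\deg_H(w) = \Delta$ and it too is never revisited. So every vertex of $H$ has degree exactly $\Delta$, and $H$ is the desired regular unit-distance graph. (Equivalently, $H$ is simply $G$ with a copy of the infinite rooted tree in which every vertex has exactly $\Delta - 1$ children hung from each vertex $v$ of $G$ in $\Delta - \deg_G(v)$ separate places.)

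I do not expect a serious obstacle here. The only technical point is the injectivity of the placement --- a new pendant must not land on a previously placed point --- and this is automatic, since at each stage only finitely many points exist while the unit circle offers infinitely many positions. The only ``idea'' required is to stop searching for a finite $H$ and to exploit that a unit-distance graph need not be an \emph{induced} subgraph of $G(\R^2,1)$. Should one instead insist that $H$ be finite, this easy argument fails and would have to be replaced by a considerably more delicate construction assembling finite unit-distance ``gadgets'' that are regular except at a single attachment vertex (with parity constraints to monitor); but the statement as posed asks only for a regular unit-distance graph, and the construction above supplies one.
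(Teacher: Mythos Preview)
Your argument is correct for the theorem as stated, but it takes a genuinely different route from the paper's. The paper produces a \emph{finite} $H$: it first shows by induction (translating by a unit vector) that for every $r\ge 1$ there is an $r$-regular unit-distance graph $K$; it then hangs pendants on $G$ to bring every original vertex up to degree $r=\Delta(G)$, collects the resulting set $P$ of degree-$1$ leaves, and pairs them off using chains of copies of $K$ minus an edge (with a separate trick of superposing $r$ rotated copies when $|P|$ is odd). Your construction instead lets $H$ be infinite, attaching to each vertex $v$ of $G$ exactly $\Delta-\deg_G(v)$ copies of the infinite rooted tree in which every vertex has $\Delta-1$ children, placed generically so that the embedding is injective.

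What each approach buys: yours is much shorter and needs neither the existence of finite $r$-regular unit-distance gadgets nor the parity case-split; the paper's approach yields a finite $H$, which is presumably the intended conclusion and is the version that feeds into the subsequent strengthening (Theorem~\ref{regulartheorem3}) about induced unit-distance graphs, where a more delicate finite gadget is required. You correctly anticipate this at the end of your write-up: the finite version really does call for ``gadgets regular except at a single attachment vertex'' together with a parity argument, and that is exactly what the paper does.
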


\begin{proof} To begin, note that for any integer $r \geq 1$, there exists an $r$-regular unit-distance graph $K$.  This is easily seen by taking $K_2$ in the case of $r=1$.  For $r > 2$, one can create an $r$-regular graph $K$ by starting with an $(r-1)$-regular unit-distance graph $K'$, and then translating each point in the plane that is a vertex of $K'$ by an appropriate unit vector.  The vertex set of $K$ will then be each of those points corresponding to vertices of $K'$ along with their translates.

Let $G$ be a unit-distance graph.  Consider a unit-distance representation of $G$ in $\mathbb{R}^2$, and suppose we have maximum degree $\Delta(G) = r$.  Form a new graph $G'$ by way of the following procedure.  For each $v \in V(G)$ with $\deg(v) < r$, place $r - \deg(v)$ points on the unit circle centered at $v$ along with edges making those newly-placed vertices adjacent to $v$.  Let $P = \{v \in V(G'): \deg(v) = 1\}$. 

Suppose $K$ is an $r$-regular unit-distance graph.  Let $K' = K - uv$ for any adjacent $u,v \in V(K)$.  Select $a,b \in P$.  Place a copy of $K'$ in the plane so that $a$ and $u$ coincide.  Now place a new vertex at distance 1 from $v$.  Repeat this procedure as many times as necessary, and do so in order that the vertex $v$ in the final copy placed of $K'$ coincides with $b$.  

If $|P|$ is even, the steps outlined above can be iteratively applied to pairs of vertices $a,b \in P$ to construct an $r$-regular graph $H$ having $G$ as a subgraph.  If $|P|$ is odd, instead select $c \in P$, and then execute the same procedure for pairs of vertices in $P \setminus \{c\}$ to eventually produce a unit-distance graph $H'$ such that $\deg(c) = 1$ and all vertices in $V(H') \setminus \{c\}$ have degree $r$.  Now take $r$ copies of $H'$, and orient them such that the vertices corresponding to $c$ in each of those copies are placed at the same point in $\mathbb{R}^2$.  This can be done so that no other vertices coincide, and the resulting graph is our desired $H$.\qed
\end{proof}

For an arbitrary graph $G$, a subgraph $K$ of $G$ is defined to be \textit{induced} if for any $u,v \in V(K)$ with $uv \not \in E(K)$, then $uv \not \in E(G)$ as well.  If $G$ is an induced subgraph of $G(\R^n, 1)$, there exists a drawing of $G$ as a unit-distance graph in $\R^n$ in which it is forbidden that non-adjacent vertices be placed at a unit-distance apart.  Often in the literature, such a drawing of $G$ is referred to as a \textit{faithful embedding} (see \cite{alon} or \cite{simonovits}).  With this in mind, we may strengthen our answer to Johnson's question by showing that if the graph $G$ in the statement of the question is an induced unit-distance graph, then $H$ can be constructed as an induced unit-distance graph as well.  To do this, all we need to do is replace the graph $K$ which was used as a device in the proof of Theorem \ref{regulartheorem1} with an induced unit distance-graph $J$ having two non-adjacent $a,b \in V(J)$ satisfying $\deg(a) = \deg(b) = r-1$, and for all $v \in V(J) \setminus \{a,b\}$, $\deg(v) = r$.  We show the existence of such $J$ in the following theorem.

\begin{theorem} \label{regulartheorem2} Let $r \in \mathbb{Z}^+$ with $r \geq 2$.  There exists an induced unit-distance graph $J$ with two non-adjacent $a,b \in V(J)$ satisfying $\deg(a) = \deg(b) = r-1$, and for all $v \in V(J) \setminus \{a,b\}$, $\deg(v) = r$.
\end{theorem}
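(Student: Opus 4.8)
The plan is to induct on $r$, repeatedly using a translate trick of the sort appearing in the proof of Theorem~\ref{regulartheorem1}. For the base case $r=2$, take $J$ to be a path $v_1 v_2 \cdots v_m$ with $m \ge 3$, drawn in $\R^2$ with unit edges and with all turning angles chosen generically; then no two non-consecutive vertices are a unit distance apart and no two vertices coincide, so this realizes $P_m$ as an induced unit-distance graph in which $v_1$ and $v_m$ are non-adjacent of degree $1 = r-1$ and every other vertex has degree $2 = r$. One can moreover arrange $\|v_1 - v_m\| < 2$ (indeed, as small as desired), a property I will want to carry through the induction.

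For the inductive step, assume $J_{r-1}$ is an induced unit-distance graph, drawn faithfully in $\R^2$, with non-adjacent vertices $a,b$ of degree $r-2$, every other vertex of degree $r-1$, and $\|a-b\| < 2$. I would first apply the translate trick: choose a unit vector $w$, adjoin the translated drawing $J_{r-1}+w$, and add all segments $\{x, x+w\}$ with $x \in V(J_{r-1})$ as edges. For all but finitely many $w$ on the unit circle this is again faithful (those segments are the only new unit distances, and no vertices coincide), and each degree has risen by one, so $a$, $b$, $a+w$, $b+w$ now have degree $r-1$ and all other vertices have degree $r$. The remaining job is to trim the number of low-degree vertices from four down to two. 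The cleanest move is to add one further edge among these four --- say $\{a, b+w\}$, which is a true unit distance once $w$ also satisfies $\|(a-b)-w\| = 1$ --- so that $a$ and $b+w$ rise to degree $r$ while $a+w$ and $b$ survive as the two degree-$(r-1)$ vertices; these are non-adjacent, since the drawing is faithful and their distance is not $1$. A variant that avoids the distance equation on $w$ is to keep $a+w$ and $b+w$ as the survivors, raise the degrees of $a$ and $b$ by attaching a new vertex $z$ at a common unit distance from $a$ and $b$ and joining it to both, and then complete $z$ to degree $r$ with an auxiliary almost-regular induced-unit-distance gadget.

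The step I expect to be the main obstacle is making all of this go through uniformly in $r$. Imposing $\|(a-b)-w\| = 1$ forces a definite value on $\|(a+w)-b\|$ --- the distance between the two new low-degree vertices --- in terms of $\|a-b\|$, and iterating the simple recipe makes that distance grow, so after finitely many stages no unit vector $w$ meeting the required equation exists. Keeping the relevant distances bounded seems to demand, at each stage, the extra-vertex variant above, which in turn needs an auxiliary gadget whose existence must be secured (most naturally as a second statement proved by the same induction). On top of that, every use of the translate trick and every placement of an auxiliary vertex or gadget must avoid a finite list of degenerate positions, so that no unintended unit distance appears --- since what has to be preserved throughout is precisely the existence of a faithful embedding. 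Reconciling these finitely many ``avoid'' conditions with the handful of unit distances the construction genuinely requires, at every stage and for every $r$, is where the real work of the argument lies.
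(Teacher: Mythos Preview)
Your inductive framework and base case match the paper's, and the first move in your inductive step---translating by a generic unit vector so that every degree rises by one, leaving exactly four deficient vertices $a,\,b,\,a+w,\,b+w$---is exactly what the paper does first.  Where your proposal stalls is the reduction from four deficient vertices to two: you correctly diagnose that Option~A (inserting the single edge $\{a,b+w\}$) forces the distance between the two survivors to satisfy $d_{\text{new}}^2 = 2d_{\text{old}}^2+1$ and hence blow up under iteration, and Option~B is left hanging on an auxiliary gadget whose existence is never secured (and whose most natural proof would be circular).  As written, then, the inductive step is incomplete; you have identified the obstacle but not removed it.

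The paper closes this gap not with extra edges or external gadgets but with two further \emph{rotational} doublings.  After the translate step (producing $J'$ with deficient vertices $a,b,f(a),f(b)$), pick a point $P$ on the perpendicular bisector of $a$ and $f(b)$ and rotate $J'$ about $P$ through the angle making $|a-g(a)|=|f(b)-g(f(b))|=1$; superposing $J'$ and $g(J')$ raises the degrees of $a,f(b),g(a),g(f(b))$ by one and leaves a doubled graph $J''$ with four deficient vertices $b,\,f(a),\,g(b),\,g(f(a))$.  Now observe that any three of these---say $f(a),b,g(b)$---lie on a common circle $C$, and a rotation about the centre of $C$ moves all three by the \emph{same} distance; choosing that distance to be $1$ and superposing $J''$ with its rotated image $h(J'')$ raises six degrees at once and leaves exactly two deficient vertices, $g(f(a))$ and $h(g(f(a)))$.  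The genericity issues you worry about are absorbed by taking $\epsilon$ small in the translation vector and $P$ far from $a,f(b)$; crucially, no numerical constraint like $\|a-b\|<2$ has to be propagated through the induction.  The ``three points on a circle'' observation, which buys three simultaneous unit matchings from a single rotation, is the idea your proposal is missing.
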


\begin{proof} We proceed by way of induction.  In the case of $r=2$, one need only let $J$ be the path $P_3$.  Now let $k \in \mathbb{Z}^+$ and suppose that $J$ is an induced unit-distance graph such that there exist non-adjacent $a,b \in V(J)$ with $\deg(a) = \deg(b) = k-1$, and for all $v \in V(J) \setminus \{a,b\}$, $\deg(v) = k$.  Position $J$ so that $a,b$ both lie on the $x$-axis.  For some small $\epsilon > 0$, define a unit vector $\phi = \langle \sqrt{1-\epsilon^2}, \epsilon \rangle$.  Let $f:\mathbb{R}^2 \rightarrow \mathbb{R}^2$ translate the plane by $\phi$.  Note that $\epsilon$ can be chosen so that the induced graph $J'$ on vertex set $V(J) \cup f(V(J))$ has vertices $a$, $b$, $f(a)$, and $f(b)$ of degree $k$ and all others of degree $k+1$.  

Let line $\mathcal{L}$ be the perpendicular bisector of $a$ and $f(b)$.  Consider a point $P$ on $\mathcal{L}$.  Let $g:\mathbb{R}^2 \rightarrow \mathbb{R}^2$ be a transformation that rotates the plane about $P$ such that $|a - g(a)| = |f(b) - g(f(b))| = 1$.  Again, note that $P$ can be selected so that the induced graph $J''$ on vertex set $V(J') \cup g(V(J'))$ has vertices $f(a)$, $b$, $g(f(a))$ and $g(b)$ of degree $k$ and all others of degree $k+1$.  Denote by $C$ the circle upon whose boundary lie $f(a)$, $b$, and $g(b)$.  Let $h:\mathbb{R}^2 \rightarrow \mathbb{R}^2$ be a transformation that rotates the plane about the center of $C$ such that $|f(a) - h(f(a))| = |b - h(b)| = |g(b) - h(g(b))| = 1$.  Let $J'''$ be the induced graph on vertex set $V(J'') \cup h(V(J''))$.  Through careful selection of $\epsilon$ and $P$ (essentially, making $\epsilon$ sufficiently small and $P$ far enough away from $a,f(b)$), we can guarantee that $J'''$ has $g(f(a))$ and $h(g(f(a)))$ being vertices of degree $k$ and all other vertices having degree $k+1$.  This completes the proof of the theorem.\qed
\end{proof}

As described above, Theorems \ref{regulartheorem1} and \ref{regulartheorem2} together give us Theorem \ref{regulartheorem3} as a corollary.

\begin{theorem} \label{regulartheorem3} Let $G$ be an induced unit-distance graph.  There exists a regular induced unit-distance graph $H$ such that $G$ is a subgraph of $H$.
\end{theorem}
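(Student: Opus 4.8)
The plan is to follow the proof of Theorem \ref{regulartheorem1} almost verbatim, making just two substitutions: we start from a faithful embedding of $G$ rather than an arbitrary unit-distance representation, and wherever that argument places a copy of the auxiliary graph $K - uv$ we instead place a copy of the induced unit-distance graph $J$ supplied by Theorem \ref{regulartheorem2}. Concretely, fix a faithful embedding of $G$ in the plane and set $r = \Delta(G)$. If $r \leq 1$ the padding step below already yields an $r$-regular graph, so assume $r \geq 2$. As in Theorem \ref{regulartheorem1}, build $G'$ by attaching to each $v \in V(G)$ with $\deg_G(v) < r$ exactly $r - \deg_G(v)$ new pendant vertices on the unit circle about $v$; choosing each such point to avoid the finitely many locations that would make it coincide with an existing point or lie at unit distance from a non-neighbour, we obtain an \emph{induced} unit-distance graph $G'$ whose set $P$ of degree-$1$ vertices is exactly the collection of new pendants.

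Next, apply Theorem \ref{regulartheorem2} with this value of $r$ to obtain an induced unit-distance graph $J$ with non-adjacent vertices $a, b$ satisfying $\deg_J(a) = \deg_J(b) = r - 1$ and $\deg_J(v) = r$ for every other vertex $v$. This $J$ takes over the role played by $K - uv$ in Theorem \ref{regulartheorem1}, the key gain being that $J$ is genuinely induced whereas $K - uv$ need not be. For each pair $\{x, y\}$ of pendants in $P$, repeat the splicing construction of Theorem \ref{regulartheorem1}: place a copy of $J$ with its copy of $a$ at $x$, attach a pendant to its copy of $b$, place a further copy of $J$ with its $a$ at that pendant, and continue, using as many copies as necessary (by the same routing argument as in Theorem \ref{regulartheorem1}) so that the copy of $b$ in the last link coincides with $y$. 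This raises $\deg(x)$ and $\deg(y)$ to $r$, keeps every interior vertex of every copy at degree $r$, and gives each connector pendant degree $1 + (r-1) = r$; as before, positioning each copy of $J$ and each connector generically keeps the running union an induced unit-distance graph.

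The parity of $|P|$ is handled precisely as in Theorem \ref{regulartheorem1}. If $|P|$ is even, partition $P$ into pairs, perform the construction above for each pair, and obtain an $r$-regular induced unit-distance graph $H$ with $G \subseteq G' \subseteq H$. If $|P|$ is odd, set one vertex $c \in P$ aside, treat $P \setminus \{c\}$ in pairs to get an induced unit-distance graph $H'$ that is $r$-regular except that $\deg_{H'}(c) = 1$, then take $r$ generically placed copies of $H'$ and identify their copies of $c$ to a single point; that point now has degree $r$ and no other degree is changed, so the resulting $H$ is the required regular induced unit-distance graph containing $G$.

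The only genuinely new content beyond citing Theorems \ref{regulartheorem1} and \ref{regulartheorem2} lies in checking that each placement of a new pendant, each placement of a copy of $J$, and the final identification of the copies of $c$ can all be carried out without creating a unit distance between two non-adjacent vertices --- this is exactly what turns every ``unit-distance'' in the proof of Theorem \ref{regulartheorem1} into ``induced unit-distance''. In each instance the set of bad configurations is the union of finitely many conditions, each excluding only a lower-dimensional set of positions, so an admissible choice always exists. I expect this genericity bookkeeping, rather than any new geometric construction, to be the main thing to nail down.
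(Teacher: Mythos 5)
Your proposal is correct and matches the paper's own argument: the paper likewise obtains Theorem \ref{regulartheorem3} as a corollary of Theorems \ref{regulartheorem1} and \ref{regulartheorem2}, by rerunning the construction of Theorem \ref{regulartheorem1} with the induced gadget $J$ from Theorem \ref{regulartheorem2} in place of $K - uv$. Your explicit genericity bookkeeping (avoiding unwanted unit distances at each placement) is the same point the paper leaves implicit, so there is nothing substantively different here.
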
 

Now, the above arguments are loosely in the same mathematical neighborhood as the material of Section 3, but they do not serve as a direct lead-in.  That said, we choose to introduce our current work by way of this discussion because Johnson's question strikes us as a departure from the way problems concerning Euclidean distance graphs are usually formulated.  Typically, a problem seen in this field will begin with a list of graph characteristics and some predetermined $n$, and then ask if there exists a Euclidean distance graph $G$ in $\mathbb{R}^n$ that satisfies those properties.  A related question may then ask for the minimum possible value of some property of $G$, be it $|V(G)|$ or $|E(G)|$, or some other parameter.  For a stunning success, see de Grey's construction \cite{degrey} of a 5-chromatic unit-distance graph in the plane, and the torrent of work that followed (see \cite{exoo}, \cite{heule}, and numerous others).  Johnson's question, however, begins with the a priori knowledge that $G$ is a unit-distance graph, and then asks about the possible nature of its embedding.  This line of questioning feels (to the author, at least) as being quite fresh, and easily lending to further investigation.  In Section 3, we will pose another question along these lines, where we ask among all graphs $G$ having $\dim(G) = n$, if it is possible, for each distance $d$ realized between points of $\mathbb{Q}^n$, that some of those graphs can be drawn with their vertices being points of $\mathbb{Q}^n$ and edges of length $d$.  We ultimately answer this in the affirmative for $n \leq 5$.  In Section 4, we give a few additional thoughts on how a future attack on this question may proceed, and along the way, resolve a related problem presented in \cite{bau} concerning clique numbers of graphs $G(\mathbb{Q}^n, d)$.

\section{Embeddings in $\mathbb{Q}^n$}

The primary question considered in this section is the following.

\begin{question} \label{mainquestion} Let $n$ be a positive integer and suppose $d > 0$ is a distance realized between points of $\mathbb{Q}^n$.  Is there guaranteed to exist a finite subgraph of $G(\mathbb{Q}^n, d)$ that is not a subgraph of $G(\mathbb{R}^{n-1}, 1)$?
\end{question}

Note that an alternate formulation of Question \ref{mainquestion} would be to ask if there exists a finite subgraph $G$ of $G(\mathbb{Q}^n, d)$ with $\dim(G) = n$.  In this section we will resolve Question \ref{mainquestion} for $n \in \{1, \ldots, 5\}$ and show that in each of those cases, it has an affirmative answer.  For $n=1$, this is trivial, as by convention, the space $\R^0$ consists of a single point.  Also obvious is the case $n=2$, as the cycle $C_4$ has $\dim(C_4) = 2$ and appears as a subgraph of $G(\mathbb{Q}^2, d)$ for all $d > 0$ realized as a distance between rational points in the plane.  In considering Question \ref{mainquestion} for $n \geq 3$, we will call upon a number of past results, each of a geometric or number-theoretic sort.  

Lemma \ref{rationalsolution} appears in many introductory number theory texts, for example \cite{nagell}.

\begin{lemma} \label{rationalsolution} Consider a quadratic Diophantine equation of the form below, where $a, \ldots, f \in \mathbb{Q}$.
$$ax^2 + bxy + cy^2 + dx + ey + f = 0$$
If this equation has at least one non-trivial rational solution $(x,y)$, it has infinitely many rational solutions.
\end{lemma}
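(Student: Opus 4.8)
The plan is to reduce the general conic to a rational parametrization. Since the equation $ax^2 + bxy + cy^2 + dx + ey + f = 0$ has a rational point $(x_0, y_0)$, I would treat this point as a ``base point'' and sweep a pencil of lines through it. Concretely, for a rational slope $t \in \mathbb{Q}$, consider the line $y = y_0 + t(x - x_0)$ and substitute it into the quadratic. After substitution we obtain a quadratic in $x$ alone whose coefficients are rational functions of $t$; crucially, because $(x_0, y_0)$ lies on the conic, $x = x_0$ is automatically a root of this quadratic. Hence the quadratic factors as (a rational-in-$t$ multiple of) $(x - x_0)(x - x_1(t))$, and the second root $x_1(t)$ is a rational function of $t$ obtained by comparing coefficients — essentially $x_1(t) = x_0 - (\text{linear coefficient})/(\text{leading coefficient})$ by Vieta. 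Feeding $x_1(t)$ back into the line equation gives a corresponding $y_1(t)$, again rational in $t$.

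The next step is to argue that this construction actually produces infinitely many \emph{distinct} rational solutions, rather than the same point repeatedly. For all but finitely many $t \in \mathbb{Q}$ the leading coefficient of the quadratic in $x$ (which is $a + bt + ct^2$) is nonzero, so the parametrization is well-defined. One then checks that the map $t \mapsto (x_1(t), y_1(t))$ is not constant: if $x_1(t)$ were constant in $t$, the conic would have to be a vertical line (or degenerate in a way contradicting that we have a genuine quadratic with a nontrivial solution), and similarly for $y_1(t)$. Discarding the finitely many bad values of $t$, the remaining rational values yield infinitely many rational points on the conic, completing the argument.

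The main obstacle — or rather, the place requiring care — is the degenerate cases. The statement as phrased allows the ``quadratic'' form to secretly be linear (e.g. $a = b = c = 0$), a product of two lines, a single doubled line, or even to have the one given solution be an isolated point of a pair of complex-conjugate lines. One must check that in each such situation either the conclusion still holds (a rational line through a rational point has infinitely many rational points) or the hypothesis of a ``non-trivial'' rational solution already rules the case out, and one should pin down exactly what ``non-trivial'' is meant to exclude (presumably a solution other than one forced trivially, or simply any solution at all if $f$-type constants make $(0,0)$ spurious). Since this lemma is quoted as standard from \cite{nagell}, I would handle these cases briskly, noting that the line-pencil argument degrades gracefully: when the leading coefficient $a + bt + ct^2$ vanishes identically the line itself is a component and contributes its full rational slate, and otherwise the Vieta computation goes through verbatim.
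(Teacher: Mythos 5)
The paper offers no proof of this lemma at all --- it is quoted as a standard fact with a citation to Nagell's \emph{Introduction to Number Theory} --- so there is nothing internal to compare against; your secant-line argument is precisely the classical proof found in such texts, and it is essentially correct. Sweeping rational-slope lines through the known point, extracting the second intersection by Vieta, and discarding the finitely many slopes where $a + bt + ct^2$ vanishes or where the map degenerates does yield infinitely many distinct rational points whenever the conic is a genuine (irreducible, real) conic or contains a rational line. The one loose end you flag is real but is a defect of the lemma's statement rather than of your argument: for an equation such as $x^2 + y^2 = 0$, whose real locus is a single point, the conclusion fails, and the word ``non-trivial'' in the statement has to be read as excluding exactly such isolated solutions. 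In the applications made in this paper (Equations \ref{eq31}, \ref{eq32}, \ref{eq33}) the conics in question are circles or ellipses with infinitely many real points, so the degenerate case never arises and your parametrization argument covers everything that is actually needed.
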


We now utilize Lemma \ref{rationalsolution} to show that the complete bipartite graph $K_{2,3}$ is a subgraph of $G(\mathbb{Q}^3, d)$ for all $d > 0$ realized as a distance between points of $\mathbb{Q}^3$.  As $\dim(K_{2,3}) = 3$ (see \cite{eht} for elaboration, or see Lemma \ref{maeharalemma} to follow), we have an affirmative answer for Question \ref{mainquestion}.

\begin{theorem} \label{nis3} The graph $K_{2,3}$ is a subgraph of $G(\mathbb{Q}^3, d)$ for every $d > 0$ realized as a distance between points of $\mathbb{Q}^3$.
\end{theorem}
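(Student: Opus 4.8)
The plan is to build an explicit copy of $K_{2,3}$ inside $\Q^3$ with edge-length $d$ by first reducing to a normalized situation and then invoking Lemma \ref{rationalsolution}. Since $d$ is realized as a distance between rational points, after translating we may assume $d^2 = p^2 + q^2 + r^2$ for some rationals $p, q, r$; equivalently $d^2$ is a sum of three rational squares. A copy of $K_{2,3}$ consists of two ``hub'' vertices $u, v$ and three ``leaf'' vertices $w_1, w_2, w_3$, each leaf adjacent to both hubs but the hubs non-adjacent to each other and the leaves mutually non-adjacent. The natural geometric picture: place $u$ and $v$ symmetrically about a plane, and look for points $w$ lying on the intersection of the two spheres of radius $d$ centered at $u$ and $v$. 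That intersection is a circle; I need to find three rational points on it (with rational coordinates) all at distance $d$ from each hub, and with the hubs at rational coordinates.

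First I would choose coordinates so that $u$ and $v$ are symmetric across the plane $z = 0$: say $u = (0,0,t)$ and $v = (0,0,-t)$ for a rational $t$ to be chosen, so any point $w = (x, y, 0)$ in the midplane is automatically equidistant from $u$ and $v$. The adjacency condition is then the single equation $x^2 + y^2 + t^2 = d^2$, i.e. $x^2 + y^2 = d^2 - t^2$. I need this circle to contain at least three rational points, which (given one rational point) follows from Lemma \ref{rationalsolution} applied to the conic $x^2 + y^2 = d^2 - t^2$ (this is exactly the quadratic Diophantine form with $a = c = 1$, $b = d_{\text{coef}} = e = 0$, $f = t^2 - d^2$). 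So the crux is to pick a rational $t$ with $0 < t^2 < d^2$ such that $d^2 - t^2$ is expressible as a sum of two rational squares and such a representation can be exhibited — equivalently, so that the circle has a rational point. Here I use that $d^2$ is a sum of three rational squares $p^2 + q^2 + r^2$: taking $t = r$ gives $d^2 - t^2 = p^2 + q^2$, an explicit sum of two rational squares, so $(p, q, 0)$ is a rational point on the circle (and I can perturb $r$ among the available representations, or rescale, if I need $t \neq 0$ and the circle nondegenerate). Then Lemma \ref{rationalsolution} upgrades this one rational point to infinitely many, from which I select three distinct ones $w_1, w_2, w_3$.

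The remaining step is to check that the configuration I have produced is genuinely a copy of $K_{2,3}$ as a subgraph — that is, that the three edges I have not drawn are actually absent, or rather (since subgraph, not induced subgraph, suffices) simply that the five points are distinct and the six required edges $uw_i$, $vw_i$ all have length exactly $d$. Distinctness of the $w_i$ is immediate once I pick three different rational points on the circle, and $u \neq v$ provided $t \neq 0$; the six distances are $d$ by construction. Since the theorem asks only for $K_{2,3}$ as a subgraph, I do not need the non-edges to be non-unit, so no further work is required there.

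I expect the main obstacle to be the very first reduction: guaranteeing that a suitable rational $t$ exists with $d^2 - t^2$ a sum of two rational squares and the resulting circle nondegenerate. The input hypothesis only says $d^2$ is a sum of three rational squares, and splitting off one square to leave a sum of two is exactly where care is needed — the ``default'' choice $t = r$ could in principle force $d^2 - t^2 = p^2 + q^2 = 0$ or make the two hubs coincide. Handling this cleanly — perhaps by first rescaling $d$ by a rational (harmless, since we only care about the graph up to the ambient geometry being $\Q^3$) to arrange a convenient representation, or by arguing that among the infinitely many three-square representations of $d^2$ one can always peel off a nonzero, non-maximal square — is the part of the argument that deserves the most attention; everything after the conic appears is a direct application of Lemma \ref{rationalsolution}.
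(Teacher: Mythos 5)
Your construction is correct, but it is organized differently from the paper's. The paper fixes one hub $a_1$ at the origin, chooses three rational points $b_1,b_2,b_3$ on the sphere of radius $d$ about the origin whose plane avoids the origin, shows the circumcenter $C$ of the triangle $b_1b_2b_3$ is rational (by intersecting the rational plane with the rational normal line through the origin), and places the second hub at $2C$, the reflection of the origin in that plane; Lemma \ref{rationalsolution} enters only implicitly, in guaranteeing enough rational points on the sphere to make such a choice of $b_1,b_2,b_3$. You instead fix \emph{both} hubs first, at $(0,0,\pm t)$, and hunt for the three leaves on the circle $x^2+y^2=d^2-t^2$ in the midplane, seeding it with a rational point obtained by splitting one square off a representation $d^2=p^2+q^2+r^2$ and then invoking Lemma \ref{rationalsolution} on that visible conic. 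What your route buys is a completely explicit application of the lemma and no appeal to the (standard but unproved in the paper) fact that a rational sphere with one rational point carries three rational points in general position; what the paper's reflection trick buys is freedom from the degenerate-representation bookkeeping you must do. The loose end you flag does need to be closed, but it closes in one line: if some representation of $d^2$ as a sum of three rational squares has at least two nonzero entries, permute coordinates so that $t$ is one nonzero entry and a nonzero entry remains in the residual sum, giving $t\neq 0$ and $d^2-t^2>0$ with the rational seed point $(p,q)\neq(0,0)$; if every representation has at most one nonzero entry, then $d$ itself is rational and $d^2=\left(\tfrac{3d}{5}\right)^2+\left(\tfrac{4d}{5}\right)^2+0^2$ produces such a representation, a contradiction with that assumption (and an explicit usable split). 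With that inserted, your argument is complete.
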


\begin{proof} Suppose that $K_{2,3}$ has bipartition $\{a_1,a_2\}$ and $\{b_1,b_2,b_3\}$.  Assume $a_1$ is the origin and consider the sphere $S$ having radius $d$ and centered at the origin.  Let $b_1,b_2,b_3$ be rational points on $S$ such that the plane determined by $b_1,b_2,b_3$ does not contain the origin.  Since $b_1,b_2,b_3$ cannot be collinear, we may let $C$ be the circumcenter of the triangle whose vertices are $b_1,b_2,b_3$.  We have $C \in \mathbb{Q}^3$ by the following rationale.  Letting $P$ be the plane defined by $b_1,b_2,b_3$, note that $P$ is given by an equation of the form $n_1x + n_2y + n_3z = q$ where $n_1, n_2, n_3, q$ are each rational.  Furthermore, the line $L$ passing through the origin and $C$ is given by the parameterization $x = n_1t$, $y = n_2t$, $z = n_3t$.  The point $C$ is just the intersection of $P$ and $L$, and in substituting the above parametric equations for $x$, $y$, and $z$ into the equation for $P$, the obtained solution for $t$ is rational.  Thus $C \in \mathbb{Q}^3$.  We may then place vertex $a_2$ at the point $2C$ to complete our embedding of $K_{2,3}$.\qed
\end{proof}

In \cite{maehara}, Maehara determines the Euclidean dimension of all complete multipartite graphs.  We remark that the Euclidean dimension of a graph $G$ (often notated $\dim_E(G)$) is defined similarly to the dimension of $G$, except for the added stipulation that in any unit-distance representation of $G$ in $\mathbb{R}^n$, non-adjacent vertices of $G$ are forbidden to be placed at a unit-distance apart.  Although he does not explicitly mention it, the constructions Maehara gives in \cite{maehara} show that for a complete multipartite graph $G$, it is indeed the case that $\dim_E(G) = \dim(G)$.  We then have Lemma \ref{maeharalemma} as an immediate corollary of the main result of \cite{maehara}.   

\begin{lemma} \label{maeharalemma} Let $\alpha, \beta, \gamma$ be non-negative integers with $\alpha + \beta + \gamma \geq 2$.  Let $G$ be a complete $(\alpha + \beta + \gamma)$-partite graph having exactly $\alpha$ parts of size 1, exactly $\beta$ parts of size 2, and exactly $\gamma$ parts of size greater than or equal to 3.  If $\beta + \gamma \leq 1$, then $\dim(G) = \alpha + \beta + 2\gamma - 1$.  If $\beta + \gamma \geq 2$, $\dim(G) = \alpha + \beta + 2\gamma$.
\end{lemma}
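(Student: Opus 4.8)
The plan is to read the statement off of Maehara's classification in \cite{maehara} together with the equality $\dim(G)=\dim_E(G)$ for complete multipartite $G$ recorded just above. Maehara expresses the Euclidean dimension of $K_{n_1,\dots,n_k}$ as an explicit function of the multiset of part sizes, so what remains is bookkeeping: to show that this function in fact depends only on the counts $\alpha,\beta,\gamma$ of parts of size $1$, size $2$, and size at least $3$, and that it equals $\alpha+\beta+2\gamma-1$ when $\beta+\gamma\le 1$ and $\alpha+\beta+2\gamma$ when $\beta+\gamma\ge 2$.

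The one point worth isolating first is why the exact sizes of the large parts are irrelevant. In any faithful representation the vertices of a part $P$ all lie on the common sphere $\Sigma_P=\{x:\ |x-w|=1\text{ for every }w\notin P\}$, and as soon as $\Sigma_P$ contains a circle one may place on it arbitrarily many points with all pairwise distances different from $1$. So, dimensionally, a part of size at least $3$ behaves like a single such circle regardless of how many vertices it has, a part of size $2$ like a pair of points, and a part of size $1$ like a point; in Maehara's formula this manifests as each part of size $2$ or of size at least $3$ contributing a fixed amount ($1$ or $2$ respectively) and each part of size $1$ contributing $1$. The only remaining subtlety is the ``$-1$'', which appears exactly when there is at most one nontrivial part: in that case one coordinate can be economized (for instance $K_n$, all parts of size $1$, is a regular simplex needing only $n-1$ dimensions, and a star $K_{1,m}$ sits on a unit circle needing only $2$), whereas with two or more nontrivial parts their circles and segments must occupy independent coordinate blocks and no such saving is possible. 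Granting this, I would finish with casework on $\beta+\gamma\in\{0,1\}$ versus $\beta+\gamma\ge 2$, reading off Maehara's value and checking the arithmetic, with $K_n$, $K_{1,m}$, $K_{2,2}$, and $K_{2,m}$ for $m\ge 3$ serving as sanity checks.

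The genuine difficulty here is not visible at this level because it has been imported wholesale from \cite{maehara}: it is the matching lower bound, that no complete multipartite graph embeds in fewer dimensions than the formula allows. A from-scratch proof would need a rank estimate -- bounding a sum of affine dimensions of the spheres $\Sigma_P$ by $n$ and ruling out that a smaller $n$ either forces two vertices from different parts to fail to be at a unit distance apart or collapses the whole configuration -- which is precisely the delicate part of \cite{maehara} and is why the lemma is fairly called an immediate corollary. The only residual item to keep honest is that Maehara works with $\dim_E$ rather than $\dim$; but $\dim(G)\le\dim_E(G)$ is automatic, and the reverse follows by a collapsing argument (given any unit-distance representation of a complete multipartite graph in $\mathbb{R}^n$, replace the parts one at a time by a single suitably chosen point of their spheres $\Sigma_P$ to produce a faithful representation in the same $\mathbb{R}^n$), so the two notions of dimension coincide and Maehara's value transfers directly.
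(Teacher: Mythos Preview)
Your approach is the same as the paper's: the lemma is not proved in the paper at all but is simply recorded as ``an immediate corollary of the main result of \cite{maehara}'' together with the observation that $\dim(G)=\dim_E(G)$ for complete multipartite $G$. Your bookkeeping and heuristic discussion go somewhat beyond what the paper offers, but the substance is identical.

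One caveat: your parenthetical ``collapsing argument'' for the inequality $\dim_E(G)\le\dim(G)$ does not work as written. Replacing each part by a single point of $\Sigma_P$ produces a faithful representation of the complete graph $K_k$ on the $k=\alpha+\beta+\gamma$ parts, not of $G$ itself, so it only yields $\dim_E(K_k)\le\dim(G)$, which is not what you need. The paper sidesteps this by appealing directly to Maehara's arguments: his lower-bound proofs for $\dim_E$ do not actually use faithfulness, so they bound $\dim$ from below by the same quantity, and together with the trivial $\dim\le\dim_E$ this gives equality. If you want to keep a self-contained justification, that is the line to take rather than collapsing.
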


We now show that for any $n \in \mathbb{Z}^+$, there is at least one value of $d$ which results in Question \ref{mainquestion} having an affirmative answer. 

\begin{theorem} \label{root2} Let $n \geq 2$.   Let $G$ be the complete multipartite graph having $(n-1)$ parts of size 1 and one part of size 3.  Then $G$ is a subgraph of $G(\mathbb{Q}^n, \sqrt2)$ but not a subgraph of $G(\mathbb{R}^{n-1}, 1)$.   
\end{theorem}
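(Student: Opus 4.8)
The ``not a subgraph'' half is immediate from Lemma \ref{maeharalemma}: in that lemma's notation $G$ has $\alpha = n-1$, $\beta = 0$, $\gamma = 1$, so $\alpha + \beta + \gamma = n \ge 2$ and $\beta + \gamma = 1 \le 1$, whence $\dim(G) = \alpha + \beta + 2\gamma - 1 = n$. Thus $G$ is not a subgraph of $G(\R^{n-1},1)$, and the content of the theorem lies in the embedding of $G$ into $G(\Q^n, \sqrt 2)$.

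For that embedding the plan is to place the $n-1$ singleton parts at the standard basis vectors $e_1, \dots, e_{n-1} \in \Q^n$; since $|e_i - e_j| = \sqrt 2$ for $i \ne j$, this realizes the clique on the singleton parts. It then suffices to produce three \emph{distinct} rational points, to serve as the part of size $3$, each at distance $\sqrt 2$ from every $e_i$ with $i \le n-1$ --- note that we need $G$ only as a subgraph, not an induced subgraph, so no condition is imposed on the pairwise distances among these three points. A point $x \in \Q^n$ satisfies $|x - e_i| = \sqrt 2$ for all $i \le n-1$ exactly when $|x|^2 - 2x_i = 1$ for each such $i$; this forces $x_1 = \dots = x_{n-1} =: t$, and substituting $|x|^2 = (n-1)t^2 + x_n^2$ shows $(t, x_n)$ must lie on the conic
\[
(n-1)t^2 - 2t + x_n^2 - 1 = 0 .
\]
The points $e_n$ and $-e_n$ already furnish the rational solutions $(t,x_n) = (0,1)$ and $(0,-1)$, so Lemma \ref{rationalsolution} provides infinitely many rational solutions; choosing any three of them (these can even be written down explicitly by cutting the conic with rational lines through $(0,1)$) gives the desired $b_1, b_2, b_3$. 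Finally, each $b_j$ has all of its first $n-1$ coordinates equal whereas each $e_i$ with $i \le n-1$ does not, so the $b_j$ are automatically distinct from $e_1, \dots, e_{n-1}$ and the map $V(G) \to \Q^n$ is injective; this completes the embedding.

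I do not anticipate a genuine obstacle here: the lone nontrivial input is that the conic above carries at least three rational points, which is precisely what Lemma \ref{rationalsolution} supplies. Should one wish for the sharper statement that $G$ embeds \emph{faithfully} in $\Q^n$, one would additionally need the three chosen points pairwise at distance $\ne \sqrt 2$; this is still easy, since a point of the conic lies at distance $\sqrt 2$ from only finitely many points of the conic (two distinct conics meet in at most four points), so a greedy selection among the infinitely many rational solutions succeeds. That refinement, however, is not required for the theorem as stated.
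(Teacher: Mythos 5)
Your proof is correct and follows essentially the same route as the paper: the dimension half via Lemma \ref{maeharalemma}, singleton parts at the standard basis vectors, and the observation that the points equidistant $\sqrt{2}$ from all of them lie on the conic $(n-1)t^2 - 2t + x_n^2 - 1 = 0$ (the expanded form of the paper's Equation \ref{eq31}), which has the rational point $(0,1)$ and hence infinitely many rational points by Lemma \ref{rationalsolution}. Your additional remarks (that the equal-coordinate form is forced rather than an ansatz, and the distinctness/faithfulness checks) are harmless extras the paper leaves implicit, so no changes are needed.
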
 

\begin{proof} By Lemma \ref{maeharalemma}, $\dim(G) = n$ so $G$ is not a subgraph of $G(\mathbb{R}^{n-1}, 1)$.  To see that $G$ is indeed a subgraph of $G(\mathbb{Q}^n, \sqrt2)$, begin by labeling the partite sets of $G$ as $\{v_1\}, \ldots, \{v_{n-1}\}, \{b_1,b_2,b_3\}$.  Place $v_1, \ldots, v_{n-1} \in \mathbb{Q}^n$ where each $v_i$ has its $i^{th}$ coordinate being a 1 and all other coordinates being 0.  Now consider points of $\mathbb{R}^n$ of the form $(t, \ldots, t, w)$.  Such a point is at distance $\sqrt2$ simultaneously from each of $v_1, \ldots, v_{n-1}$ if and only if Equation \ref{eq31} holds.

\begin{equation} 
(n-2)t^2 + (t-1)^2 + w^2 = 2
\label{eq31}
\end{equation}

\noindent We know this equation has solution $t=0, w=1$.  By Lemma \ref{rationalsolution}, it has infinitely many rational solutions.  Thus $b_1,b_2,b_3$ can all be placed at points of $\mathbb{Q}^n$.\qed 
\end{proof}

We give as Lemma \ref{baulemma} the main result of the recent \cite{bau}.  Coupled with Theorem \ref{root2}, it answers Question \ref{mainquestion} affirmatively for all $n \equiv 0 \pmod 4$.

\begin{lemma} \label{baulemma} A space $\mathbb{Q}^n$ has the property that the graphs $G(\mathbb{Q}^n, d_1)$ and $G(\mathbb{Q}^n, d_2)$ are isomorphic for all pairs of distances $d_1, d_2 > 0$ realized between points of $\mathbb{Q}^n$ if and only if $n$ is equal to 1, 2, or a multiple of 4.
\end{lemma}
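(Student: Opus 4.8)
Write $\mathcal{N} = \{1, 2\} \cup \{n : 4 \mid n\}$; the statement is an equivalence, and I would prove the two directions by quite different methods.

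For the ``if'' direction the plan is to produce, for any two distances $d_1, d_2$ realized in $\mathbb{Q}^n$, a $\mathbb{Q}$-linear bijection $\varphi$ of $\mathbb{Q}^n$ that scales every Euclidean distance by the constant factor $d_2/d_1$; such a $\varphi$ is automatically a graph isomorphism $G(\mathbb{Q}^n, d_1) \to G(\mathbb{Q}^n, d_2)$. Since $d_i$ is realized, $d_i^2$ is a sum of $n$ rational squares, and the first step is the number-theoretic observation that for $n \in \mathcal{N}$ the set of positive rationals that are sums of $n$ rational squares is a subgroup of $\mathbb{Q}^{>0}$: trivially for $n = 1$; for $n = 2$ because these are exactly the nonzero norms from $\mathbb{Q}(i)$ (Brahmagupta--Fibonacci for closure under multiplication, and $1/(a^2+b^2) = (a/(a^2+b^2))^2 + (b/(a^2+b^2))^2$ for inverses); and for $4 \mid n$ because by Lagrange every positive rational is already a sum of four, hence of $n$, rational squares. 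Consequently $(d_2/d_1)^2$ is again a sum of $n$ rational squares, so it is the norm of an invertible element $\lambda$ of, respectively, $\mathbb{Q}$, the Gaussian field $\mathbb{Q}(i)$, the rational Hamilton quaternions $\mathbb{H}_{\mathbb{Q}}$, or (when $n = 4m$) $\mathbb{H}_{\mathbb{Q}}$ acting block-diagonally on $(\mathbb{Q}^4)^m = \mathbb{Q}^n$. Left multiplication by $\lambda$ is the desired $\varphi$: $\mathbb{Q}$-linear, invertible because $\lambda$ is, and norm-multiplicative, hence scaling all distances by $|\lambda| = d_2/d_1$. This also makes transparent why the idea fails in the other dimensions: there is no composition-algebra structure available on $\mathbb{Q}^n$ when $n \notin \mathcal{N}$.

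For the ``only if'' direction I would argue the contrapositive: for $n = 3$, and for $n \geq 5$ with $4 \nmid n$, exhibit two realized distances $d_1, d_2$ with $\omega(\mathbb{Q}^n, d_1) \neq \omega(\mathbb{Q}^n, d_2)$; since $\omega$ is a graph invariant the two graphs are then non-isomorphic. The engine is the translation of the geometric statement ``a regular $(m-1)$-simplex of edge $d$ embeds in $\mathbb{Q}^n$'' into the arithmetic statement ``the positive definite rational quadratic form with Gram matrix $\frac{d^2}{2}(I_{m-1}+J_{m-1})$, where $J$ denotes the all-ones matrix, is a subform of the sum-of-$n$-squares form $\langle 1, \dots, 1\rangle$ of rank $n$'' --- a condition governed, via Witt cancellation and the Hasse--Minkowski theorem, by discriminants and Hilbert symbols. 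The computational input I would isolate as a lemma is the rational identity $(I_{m-1}+J_{m-1}) \perp \langle m\rangle \cong \langle 1,\dots,1\rangle$ of rank $m$, obtained by placing the regular simplex at $e_1,\dots,e_m$ and taking the orthogonal complement of the all-ones vector; from it one reads off the discriminant and Hasse symbols of $\frac{d^2}{2}(I_{m-1}+J_{m-1})$ in terms of $m$ and of $d^2$ modulo squares. Running this for $m = n$ and $m = n+1$: the vectors $e_1,\dots,e_n$ give $\omega(\mathbb{Q}^n, \sqrt 2) \geq n$; a discriminant obstruction (this is where $4 \nmid n$ enters) pins $\omega(\mathbb{Q}^n, \sqrt 2)$ to be $n$, except that it is $n+1$ when $n+1$ is a perfect square; and one then locates a second realized distance $d_2$, of the form $\sqrt{2c}$ for a suitable squarefree integer $c$, for which a Hilbert-symbol condition flips, changing whether $K_n$ (or $K_{n+1}$) embeds and hence changing $\omega$. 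The smallest case $n = 3$ can be done by hand: $\{(0,0,0),(1,1,0),(1,0,1),(0,1,1)\}$ is a regular tetrahedron of edge $\sqrt 2$, so $\omega(\mathbb{Q}^3, \sqrt 2) = 4$, whereas the Gram form of a regular tetrahedron of edge $1$ has discriminant $1/2$, hence is not $\mathbb{Q}$-equivalent to $\langle 1,1,1\rangle$, so $K_4 \not\subseteq G(\mathbb{Q}^3, 1)$ and $\omega(\mathbb{Q}^3, 1) \leq 3 < 4$.

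The main obstacle is this ``only if'' direction, and within it two points demand real care. First, the Hilbert-symbol bookkeeping must be organized so that the dependence on $n \bmod 4$ actually surfaces --- it does so through the parities of binomial coefficients such as $\binom{n-1}{2}$ and $\binom{n}{2}$ that appear when one computes the Hasse symbol of a scaled form --- and one must verify, residue class by residue class (handling $n \equiv 1, 2, 3 \pmod 4$ separately, including exceptional perfect-square situations such as $n = 9$ or $n = 15$), that for each such $n$ at least one of the moves ``force $K_{n+1}$ to appear'' or ``force $K_n$ to disappear'' is genuinely available for some realized $d_2$. Second, the distances chosen must themselves be realized in $\mathbb{Q}^n$: for $n \geq 4$ this is automatic since every positive rational is a sum of four rational squares, but for $n = 3$ it is a genuine constraint governed by Legendre's three-square theorem, which is exactly why the $n = 3$ case is treated separately and concretely above.
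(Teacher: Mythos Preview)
This lemma is not proved in the present paper; it is quoted as the main result of \cite{bau}, and the paper only records two features of that proof: the ``if'' direction rests on the existence, for $n\equiv 0\pmod 4$, of a bijection $\varphi:\mathbb{Q}^n\to\mathbb{Q}^n$ scaling all distances by $\sqrt{q}$ for any $q\in\mathbb{Q}^+$, and the ``only if'' direction is obtained by showing $C_1(\mathbb{Q}^n)\neq c_1(\mathbb{Q}^n)$ when $n\geq 3$ and $4\nmid n$. Your proposal follows exactly this architecture --- a quaternionic (composition-algebra) scaling map for the forward direction, and a clique-number separation for the converse --- so it matches the approach the paper attributes to \cite{bau}. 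Your quadratic-form / Hasse--Minkowski packaging of the simplex-embedding criterion is more explicit than anything stated here, but it is the natural way to carry out the $\omega$-computations the paper alludes to, and your $n=3$ calculation is correct.
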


In \cite{beeson}, Beeson resolves the problem of deciding when a given triangle is similar to one that can be drawn with its vertices being points of $\mathbb{Z}^n$, with Lemma \ref{beesonlemma} being one of the main results.  We will state it exactly as it appears in \cite{beeson}, and then give a small amount of elaboration afterward.  

\begin{lemma} \label{beesonlemma} A triangle is embeddable in $\mathbb{Z}^4$ if and only if all of its tangents are rational multiples of $\sqrt{k}$, where $k$ is a positive integer that can be represented as a sum of three squares. 
\end{lemma}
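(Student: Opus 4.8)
The plan is to prove both directions through one dictionary: a triangle is embeddable in $\mathbb{Z}^4$ exactly when, after rescaling, it has a representative with one vertex at the origin and incident edge-vectors $u,v\in\mathbb{Z}^4$, in which case its shape is recorded by the Gram matrix $\left(\begin{smallmatrix}|u|^2 & u\cdot v\\ u\cdot v & |v|^2\end{smallmatrix}\right)$ and $16(\mathrm{Area})^2 = 4\bigl(|u|^2|v|^2-(u\cdot v)^2\bigr)$. The argument rests on two elementary identities. First, for all $u,v\in\mathbb{R}^4$, writing $p_{ij}=u_iv_j-u_jv_i$,
\begin{equation*}
|u|^2|v|^2-(u\cdot v)^2 = (p_{12}+p_{34})^2+(p_{13}-p_{24})^2+(p_{14}+p_{23})^2,
\end{equation*}
which one verifies by expansion (it expresses that a decomposable $2$-vector in $\mathbb{R}^4$ has self-dual and anti-self-dual parts of equal norm, the Pl\"ucker relation $p_{12}p_{34}-p_{13}p_{24}+p_{14}p_{23}=0$ being exactly what cancels the cross terms). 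Second, if $\alpha\sqrt k,\beta\sqrt k,\gamma\sqrt k$ are the tangents of the three angles of a non-right triangle, then the relation $\alpha+\beta+\gamma=k\alpha\beta\gamma$ forced by $A+B+C=\pi$ yields, after expansion,
\begin{equation*}
(1+\alpha^2k)(1+\beta^2k)(1+\gamma^2k) = \bigl(1-k(\alpha\beta+\beta\gamma+\gamma\alpha)\bigr)^2.
\end{equation*}

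For necessity, suppose the triangle lies in $\mathbb{Z}^4$. Then $a^2,b^2,c^2$ and $16(\mathrm{Area})^2$ are non-negative integers, and applying the first identity to two edge-vectors shows $16(\mathrm{Area})^2=4D$ with $D$ a sum of three integer squares; writing $D=k_0m^2$ with $k_0$ squarefree, Legendre's three-square theorem forces $k_0\not\equiv 7\pmod{8}$, and since $k_0$ is squarefree this makes $k_0$ itself a sum of three squares. From $\tan A = 4(\mathrm{Area})/(b^2+c^2-a^2)$ (and cyclically) together with $4\,\mathrm{Area}=2\sqrt{D}=2\sqrt{D/k_0}\,\sqrt{k_0}$, all three tangents are then rational multiples of $\sqrt{k_0}$. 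A right triangle is handled by running the same computation on its two acute angles.

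For sufficiency, start from tangents $\alpha\sqrt k,\beta\sqrt k,\gamma\sqrt k$ with $\alpha,\beta,\gamma\in\mathbb{Q}$ and $k$ a sum of three squares; replacing $k$ by its squarefree part $k_0$ is harmless, since if $k_0$ were not a sum of three squares then $k_0\equiv 7\pmod{8}$, and then no square multiple of it, in particular $k$, would be either. Using the law of sines, $\sin^2\theta=\tan^2\theta/(1+\tan^2\theta)$, and the second identity, I would check that $c^2:b^2:a^2$ and all mixed quantities such as $bc\cos A$ lie in rational ratio, so the shape has a representative with integral positive-definite Gram matrix $\left(\begin{smallmatrix}p_0 & r_0\\ r_0 & q_0\end{smallmatrix}\right)$, and the squarefree part of $D_0:=p_0q_0-r_0^2$ is $k_0$. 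Now rescale the triangle by $n=p_0t^2$ for a suitable positive integer $t$: the first squared side becomes $p_0^2t^2=(p_0t)^2$, so take the first edge-vector to be $u=(p_0t,0,0,0)$; the condition $u\cdot v=p_0t^2r_0$ forces $v_1=tr_0$, and $|v|^2=p_0t^2q_0$ then reduces to $v_2^2+v_3^2+v_4^2=t^2D_0$. Since $t^2D_0$ has the same squarefree part $k_0\not\equiv 7\pmod{8}$ as $D_0$, Legendre's theorem provides $v_2,v_3,v_4\in\mathbb{Z}$, and the vertices $0$, $u$, $v$ furnish the embedding. (Only the three-square theorem is needed: one coordinate direction carries $u$, and the remaining three carry the representation of $t^2D_0$, which is why the target dimension is $4$.)

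The step I expect to be the main obstacle is making the ``all mixed quantities are rational'' reduction rigorous --- showing that the tangent hypothesis \emph{by itself}, with no prior assumption that $a^2:b^2:c^2$ or $\cos A\cos B\cos C$ is rational, already forces a fully rational Gram matrix. This is precisely where the second identity is indispensable: it gives $\cos A\cos B\cos C=\pm\bigl(1-k(\alpha\beta+\beta\gamma+\gamma\alpha)\bigr)^{-1}$ and, more to the point, exhibits each ratio $\tfrac{b}{c}\cos A$ as a square root of a rational square, hence rational; without it one could fear cosines that are individually irrational but whose product is rational, and then no integral Gram matrix would exist. A secondary point demanding care is the bookkeeping with square factors --- checking at each rescaling that passing among $D$, $D_0$, $k$, $k_0$, and $t^2D_0$ never alters membership in the forbidden class $4^a(8b+7)$ --- which reduces to the fact that multiplying a squarefree integer by an odd square preserves its residue modulo $8$ (and multiplying by $4$ only shifts the power of $4$).
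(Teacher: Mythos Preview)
The paper does not prove this lemma; it is quoted from Beeson \cite{beeson} and used as a black box, so there is no proof in the paper to compare against. Your proposal is an independent argument for a cited result, and its overall structure is correct.

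The necessity direction is clean: the Pl\"ucker identity
\[
|u|^2|v|^2-(u\cdot v)^2=(p_{12}+p_{34})^2+(p_{13}-p_{24})^2+(p_{14}+p_{23})^2,
\]
valid because $u\wedge v$ is decomposable, shows $4(\mathrm{Area})^2$ is a sum of three integer squares, and $\tan A=4\,\mathrm{Area}/(b^2+c^2-a^2)$ finishes. For sufficiency, reducing to an integral Gram matrix $\bigl(\begin{smallmatrix}p_0&r_0\\r_0&q_0\end{smallmatrix}\bigr)$ whose determinant $D_0$ has squarefree part $k_0\not\equiv7\pmod8$, and then embedding via $u=(p_0t,0,0,0)$, $v=(tr_0,v_2,v_3,v_4)$ with $v_2^2+v_3^2+v_4^2=t^2D_0$, is exactly the right construction; note that \emph{any} $t\ge1$ works here, since the squarefree part of $t^2D_0$ is still $k_0$, so the qualifier ``suitable'' is unnecessary.

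Two places need tightening before this is a proof rather than a plan. First, the phrase ``I would check'' should be replaced by the actual check: from $\sin^2\theta=\tan^2\theta/(1+\tan^2\theta)$ and the law of sines one gets $a^2:b^2:c^2=\dfrac{\alpha^2k}{1+\alpha^2k}:\dfrac{\beta^2k}{1+\beta^2k}:\dfrac{\gamma^2k}{1+\gamma^2k}$, a rational ratio, and then $2bc\cos A=b^2+c^2-a^2$ makes the off-diagonal Gram entry rational automatically---your second identity is not strictly needed for this, only for confirming that the squarefree part of $D_0$ equals $k_0$. Second, the right-triangle case in the \emph{sufficiency} direction deserves a sentence: with one tangent infinite the relation $\alpha+\beta+\gamma=k\alpha\beta\gamma$ degenerates to $\alpha\beta k=1$, and placing the right angle at the origin gives $r_0=0$, $D_0=p_0q_0$, after which the same three-square argument applies.
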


Throughout \cite{beeson}, Beeson uses the word ``embeddable" not to mean precisely that a triangle $T$ can be drawn with its vertices being points of $\mathbb{Z}^n$, but rather that some triangle similar to $T$ can be so drawn.  This distinction ends up mattering when the setting is $\mathbb{Z}^4$, as for example, an equilateral triangle with edge-length 1 cannot be drawn with its vertices being points of $\mathbb{Z}^4$, but an equilateral triangle with side length $\sqrt2$ can.  However, for our means this is not an issue when we shift the setting to $\mathbb{Q}^4$.  Fundamental to the proof of Lemma \ref{baulemma} in \cite{bau} is the existence, for $n \equiv 0 \pmod 4$, of a bijective transformation $\varphi: \mathbb{Q}^n \rightarrow \mathbb{Q}^n$ that scales distance by a factor $\sqrt{q}$ for any $q \in \mathbb{Q}^+$.  Thus Beeson's notion of ``embeddability" can just be taken to mean that if a triangle $T$ (whose side lengths are of the form $\sqrt{a}, \sqrt{b}, \sqrt{c}$ with $a,b,c$ rational) fits the given hypotheses, then $T$ can be oriented so that its vertices are points of $\mathbb{Q}^4$.  We formalize this observation in Lemma \ref{beesonq4lemma}.  

\begin{lemma} \label{beesonq4lemma} Let $T$ be a triangle with side lengths $\sqrt{a}, \sqrt{b}, \sqrt{c}$ with $a,b,c \in \mathbb{Q}^+$.  Then $T$ can be drawn with its vertices being points of $\mathbb{Q}^4$ if and only if all of its tangents are rational multiples of $\sqrt{k}$, where $k$ is a positive integer that can be represented as a sum of three squares. 
\end{lemma}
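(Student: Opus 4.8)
The plan is to derive Lemma \ref{beesonq4lemma} directly from Beeson's Lemma \ref{beesonlemma} together with the existence of the scaling map $\varphi$ discussed just above the statement. The key point is that the notion of ``embeddable'' in \cite{beeson} is a statement about the \emph{similarity class} of a triangle, whereas Lemma \ref{beesonq4lemma} is a statement about a specific triangle $T$ with prescribed side lengths $\sqrt{a}, \sqrt{b}, \sqrt{c}$. So the entire content of the lemma is showing that, over $\mathbb{Q}^4$ (as opposed to $\mathbb{Z}^4$), these two notions coincide for triangles whose squared side lengths are rational. Note first that the hypothesis ``all tangents are rational multiples of $\sqrt{k}$'' is a property of the similarity class of $T$, so it is unambiguous to speak of it for $T$ itself.

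First I would prove the forward direction. Suppose $T$ can be drawn with vertices in $\mathbb{Q}^4$. Clearing denominators by scaling by a suitable positive integer $m$, the triangle $mT$ has vertices in $\mathbb{Z}^4$; hence in Beeson's terminology the similarity class of $T$ is embeddable in $\mathbb{Z}^4$, so by Lemma \ref{beesonlemma} its tangents are rational multiples of $\sqrt{k}$ for $k$ a sum of three squares. (One should check that scaling a point of $\mathbb{Q}^4$ by an integer keeps it in $\mathbb{Q}^4$ and in fact lands in $\mathbb{Z}^4$ once $m$ is the lcm of all coordinate denominators — routine.)

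For the converse, suppose all tangents of $T$ are rational multiples of $\sqrt{k}$ with $k$ a sum of three squares. By Lemma \ref{beesonlemma}, some triangle $T'$ similar to $T$ has vertices in $\mathbb{Z}^4 \subseteq \mathbb{Q}^4$. Write the ratio of similarity as $\lambda$, so $T = \lambda T'$ up to congruence; then the side lengths of $T$ are $\lambda$ times those of $T'$. Since the side lengths of $T$ are $\sqrt{a},\sqrt{b},\sqrt{c}$ with $a,b,c \in \mathbb{Q}^+$ and the side lengths of $T'$ are square roots of positive integers (norms of integer vectors), each ratio $\sqrt{a}/(\text{side of }T')$ equals $\lambda$; squaring shows $\lambda^2 \in \mathbb{Q}^+$, say $\lambda^2 = q$. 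Now apply the bijective map $\varphi : \mathbb{Q}^4 \to \mathbb{Q}^4$ which scales all distances by $\sqrt{q}$: applying $\varphi$ to the vertices of $T'$ produces a triangle congruent to $T$ with all vertices in $\mathbb{Q}^4$, and then an isometry of $\mathbb{Q}^4$ (translation, or more carefully a rational rotation — but congruence suffices since we only need the existence of \emph{some} drawing) places it as $T$ itself. This completes the converse.

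The main obstacle is the bookkeeping around similarity versus congruence and making sure the scaling factor $\lambda$ really is the square root of a \emph{rational}, not merely a real, number — this is exactly where the hypothesis $a,b,c \in \mathbb{Q}^+$ is used, and it is the reason the lemma is cleaner over $\mathbb{Q}^4$ than over $\mathbb{Z}^4$. Once $\lambda^2 \in \mathbb{Q}^+$ is established, the existence of $\varphi$ (granted from the discussion preceding the statement, which attributes it to the proof of Lemma \ref{baulemma} in \cite{bau}) does all the remaining work, since $\varphi$ is precisely the tool that converts a $\mathbb{Z}^4$-embedding of \emph{some} similar copy into a $\mathbb{Q}^4$-embedding of the exact triangle. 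A minor secondary point to be careful about is that $\varphi$ scales distances but need not be an isometry composed with a homothety in the naive sense; however, for the purpose of realizing a given congruence class of triangle in $\mathbb{Q}^4$ it is enough that $\varphi$ multiplies every pairwise distance by the same factor $\sqrt{q}$, which is exactly its stated property.
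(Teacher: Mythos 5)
Your proposal is correct and takes essentially the same route the paper does: it derives the lemma from Beeson's result (Lemma \ref{beesonlemma}) applied to the similarity class of $T$, using clearing of denominators for the forward direction and the distance-scaling bijection $\varphi:\mathbb{Q}^4\rightarrow\mathbb{Q}^4$ from \cite{bau} (with $q=\lambda^2\in\mathbb{Q}^+$ coming from the rationality of $a,b,c$) for the converse. The paper presents this only as an informal observation in the paragraph preceding the lemma; your writeup supplies the same argument with the bookkeeping made explicit.
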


There are a few more things to note about Lemmas \ref{beesonlemma} and \ref{beesonq4lemma}.  In the commentary in \cite{beeson}, Beeson makes an exception for $90^\circ$ angles, and says to just consider $\infty$ (that is, $\tan 90^\circ$) as also being a rational multiple of an appropriate $\sqrt{k}$.  Also, the positive integers $k$ representable as a sum of three integer squares are described by a well-known result of Gauss, where it is shown that $k = x^2 + y^2 + z^2$ for integers $x,y,z$ if and only if the square-free part of $k$ is not congruent to $7 \pmod 8$.

We now prove that Question \ref{mainquestion} has an affirmative answer for $n = 5$.  Note that the dimension of the complete tripartite graph $K_{1,3,3}$ is 5 by Lemma \ref{maeharalemma}, and also that for every $r \in \mathbb{Q}^+$ and $n \geq 4$, $\sqrt{r}$ is realized as a distance between points of $\mathbb{Q}^n$ as a consequence of Lagrange's four-square theorem.

\begin{theorem} \label{nis5} The complete tripartite graph $K_{1,3,3}$ is a subgraph of $G(\mathbb{Q}^5, \sqrt{r})$ for every $r \in \mathbb{Q}^+$.
\end{theorem}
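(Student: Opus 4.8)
The plan is to place the single-vertex part at a convenient rational point, then realize the two size-3 parts as triangles lying in complementary rational subspaces, invoking Lemma \ref{beesonq4lemma} to guarantee rationality of coordinates. Label the partite sets of $K_{1,3,3}$ as $\{v\}$, $\{a_1,a_2,a_3\}$, and $\{b_1,b_2,b_3\}$. Fix $r \in \mathbb{Q}^+$ and set the target distance $d = \sqrt{r}$. First I would put $v$ at the origin of $\mathbb{Q}^5$. Each $a_i$ must lie on the sphere of radius $\sqrt{r}$ about the origin, and likewise each $b_j$; in addition every $a_i$ must be at distance $\sqrt{r}$ from every $b_j$, while vertices within the same part carry no constraint. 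A natural way to decouple the two parts is to look for the $a_i$ in a rational $2$-flat $A$ and the $b_j$ in a rational $2$-flat $B$ such that $A$ and $B$ are "orthogonal" in a suitable sense and positioned so that the $a$–$b$ distance is automatically controlled.

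The concrete construction I have in mind: choose a rational point $p \in \mathbb{Q}^5$ and let the $a_i$ range over an equilateral-type triangle (with rational squared side lengths) centered at $p$ inside the affine plane $p + \mathrm{span}(e_1,e_2)$, and the $b_j$ over a similar triangle centered at a point $q$ inside $q + \mathrm{span}(e_3,e_4)$, with the fifth coordinate used to fine-tune. The point is that if $a_i = p + u_i$ with $u_i$ in the first plane and $b_j = q + w_j$ with $w_j$ in the second, then $|a_i - b_j|^2 = |p-q|^2 + |u_i|^2 + |w_j|^2 + (\text{cross terms})$, and by choosing $p-q$ to lie along $e_5$ the cross terms vanish, leaving $|a_i-b_j|^2 = |p-q|^2 + \rho_a^2 + \rho_b^2$ where $\rho_a, \rho_b$ are the (common) circumradii of the two triangles. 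So the three conditions reduce to: (i) $|p|^2 + \rho_a^2 = r$, (ii) $|q|^2 + \rho_b^2 = r$, (iii) $|p-q|^2 + \rho_a^2 + \rho_b^2 = r$. Since $p,q$ differ only in the $e_5$-coordinate, $|p-q|^2$, $|p|^2$, $|q|^2$ are all at our disposal, and one checks this system has rational solutions with $\rho_a^2, \rho_b^2 \in \mathbb{Q}^+$ for a suitable choice — essentially one free rational parameter remains.

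What remains, and what I expect to be the crux, is producing the two equilateral triangles themselves as rational-coordinate configurations: I need a triangle with rational squared side length $3\rho_a^2$ (the side of an equilateral triangle with circumradius $\rho_a$) drawn with vertices in $\mathbb{Q}^2$ — but equilateral triangles do \emph{not} embed in $\mathbb{Q}^2$. This is exactly where the fourth dimension must be spent: rather than $\mathrm{span}(e_1,e_2)$, I should take each triangle to live in a rational $\mathbf{4}$-dimensional flat and apply Lemma \ref{beesonq4lemma}. An equilateral triangle has all tangents equal to $\tan 60^\circ = \sqrt{3}$, a rational multiple of $\sqrt{3}$, and $3 = 1^2+1^2+1^2$ is a sum of three squares, so by Lemma \ref{beesonq4lemma} an equilateral triangle of \emph{any} rational squared side length embeds in $\mathbb{Q}^4$. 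But then I have used $4 + 4 = 8$ coordinates, not $5$. The resolution — and the real content of the theorem — is that the two $\mathbb{Q}^4$-flats can be chosen to \emph{share} three coordinates: place both triangles so that they span the same rational $3$-flat for their "internal" degrees of freedom (the equilateral triangle is $2$-dimensional but needs a $3$-dimensional rational flat to have rational coordinates, by a Beeson-type argument in $\mathbb{Z}^3$), and use the remaining coordinates plus one separating coordinate to pull the two triangles apart and onto the sphere about $v$. I would verify carefully that a single equilateral triangle with rational squared side embeds already in $\mathbb{Q}^3$ up to rational scaling when the side is chosen appropriately (here the sum-of-three-squares condition on $k=3$ is what makes $\mathbb{Q}^3$, not $\mathbb{Q}^4$, suffice after rational rescaling), so that both triangles fit in a common $\mathbb{Q}^3$ and only coordinates $4$ and $5$ are needed for the global placement. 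The bookkeeping to confirm that all resulting coordinates are rational and all nine cross-part distances equal $\sqrt{r}$ is the routine part; the delicate part is the dimension count showing $3 + 2 = 5$ coordinates genuinely suffice.
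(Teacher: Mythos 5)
The engine of your construction is the vanishing of the cross terms: $|a_i-b_j|^2=|p-q|^2+|u_i|^2+|w_j|^2$ holds only because the $u_i$ and $w_j$ lie in orthogonal coordinate planes, both orthogonal to $p-q$. The fix you propose for the resulting $4+4=8$ coordinate overrun --- letting the two triangles share a common rational $3$-flat --- destroys exactly this mechanism: once the $u_i$ and $w_j$ share coordinates, the terms $u_i\cdot w_j$ do not vanish, and the nine cross-part distances are no longer $\sqrt{|p-q|^2+\rho_a^2+\rho_b^2}$; you would instead need all nine differences $u_i-w_j$ to have equal norm, which is precisely the simultaneous-equidistance problem the theorem is about, not ``routine bookkeeping.'' So the crucial dimension count $3+2=5$ is not established, and as written the argument only succeeds where the two flats can be kept orthogonal, i.e.\ in something like $\mathbb{Q}^8$.

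There are two further problems. Equilaterality is a red herring: $\{a_1,a_2,a_3\}$ is an independent set of $K_{1,3,3}$, so nothing constrains the mutual distances of the $a_i$; what your decoupling actually needs is three rational points at a common distance $\rho_a$ from a rational center inside the chosen flat. In a $2$-flat this forces $\rho_a^2$ to be a sum of two rational squares, and your system (i)--(iii) reduces to $p_5q_5=r/2$ with $\rho_a^2=r-p_5^2$ and $\rho_b^2=r-q_5^2$, so you must show these two quantities can simultaneously be made sums of two rational squares --- a genuine arithmetic statement that the proposal never addresses; this, not the placement geometry, is the crux. The fallback claim that an equilateral triangle of the required side embeds in $\mathbb{Q}^3$ ``when the side is chosen appropriately'' is also unsafe: the admissible squared sides in $\mathbb{Q}^3$ are restricted (for instance a unit equilateral triangle has no rational embedding in $\mathbb{Q}^3$, where the unit-distance graph is bipartite), and in your setup the scale is dictated by the equations rather than free. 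The paper avoids all of this by never using special triangles: it takes three arbitrary rational points on a conic through one known rational point via Lemma \ref{rationalsolution}, places the second part on the rational line of points equidistant from $a_1,a_2,a_3,c$, and spends its effort on the unavoidable number theory (Lemma \ref{beesonq4lemma} together with Gauss's three-square criterion) in choosing the parameter $r_1$. Any repair of your approach should expect to confront that same arithmetic head-on.
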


\begin{proof} Without loss of generality, we may assume that $r$ is a positive integer.  Let $G = K_{1,3,3}$ and suppose that $G$ has partite sets $\{a_1,a_2,a_3\}$, $\{b_1,b_2,b_3\}$, and $\{c\}$.  To begin our embedding of $G$ in $\mathbb{Q}^5$, place $c$ at the origin $O = (0,0,0,0,0)$.  Let $T$ be the right triangle in the diagram below.

\begin{center}
\includegraphics[scale=.7]{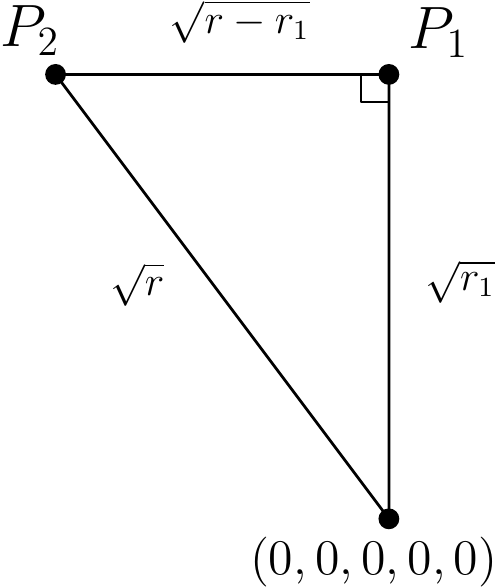}
\end{center}

Write $P_1 = (x_1,y_1,z_1,w_1,0)$ and $P_2 = (x_2,y_2,z_2,w_2,0)$ and assume $P_1, P_2 \in \mathbb{Q}^5$.  For this assumption to be valid, by Lemma \ref{beesonq4lemma} we must have $\sqrt{\frac{r-r_1}{r_1}}$ a rational multiple of $\sqrt{k}$ for some integer $k$ representable as a sum of three squares.  Note that $\sqrt{\frac{r-r_1}{r_1}}$ satisfying this property implies $\sqrt{\frac{r_1}{r - r_1}}$ does as well.

Now choose a point $P_3 \in \mathbb{Q}^5$ such $P_3$ has 0 as its fifth coordinate, and that the vector $\overrightarrow{P_1P_3}$ is orthogonal to both $\overrightarrow{OP_1}$ and $\overrightarrow{P_1P_2}$.  Let $\mathcal{P}$ be the plane defined by vectors $u = \overrightarrow{P_1P_2}$ and $v = \overrightarrow{P_1P_3}$.  Just to make sure we are clear in the terminology being used, note that $\mathcal{P}$ is a 2-dimensional plane, not a hyperplane.  Letting $u = \langle u_1,u_2,u_3,u_4,0 \rangle$ and $v = \langle v_1,v_2,v_3,v_4,0 \rangle$, we can parameterize points of $\mathcal{P}$ as
$(su_1 + tv_1 + x_1, su_2 + tv_2 + y_1, su_3 + tv_3 + z_1, su_4 + tv_4 + w_1, 0)$ for $s,t \in \mathbb{R}$.  The set of points in $\mathcal{P}$ at distance $\sqrt{r}$ from the origin are those satisfying Equation \ref{eq32}.

\begin{equation} 
(su_1 + tv_1 + x_1)^2 + (su_2 + tv_2 + y_1)^2 + (su_3 + tv_3 + z_1)^2 + (su_4 + tv_4 + w_1)^2 = r
\label{eq32}
\end{equation}

\noindent This Diophantine equation satisfies the hypotheses of Lemma \ref{rationalsolution}, with the existence of point $P_2$ implying that is has infinitely many rational solutions.  Three of these solutions can be used to place $a_1, a_2, a_3$.

Let $\mathcal{S} \subset \mathbb{R}^5$ be the set of all points simultaneously equidistant to $a_1, a_2, a_3, c$ and whose last coordinate is 0.  Set $\mathcal{S}$ consists of the intersection of three hyperplanes of $\mathbb{R}^4$, each of which has rational coefficients.  We have $\mathcal{S}$ being a line which may be parameterized as $(\alpha_1t + \beta_1, \alpha_2t + \beta_2, \alpha_3t + \beta_3, \alpha_4t + \beta_4, 0)$ where the $\alpha_i$ are rational.  We of course may use any point on the line for $\beta = (\beta_1, \beta_2, \beta_3, \beta_4, 0)$, however we will specifically set $\beta = (qx_1, qy_1, qz_1, qw_1, 0)$ where $q = \frac{r}{2r_1}$.  It is straightforward to verify that $\beta$ is equidistant from each of $a_1, a_2, a_3, c$.    

Let $\mathcal{S}' \subset \mathbb{R}^5$ be simply the set of all points simultaneously equidistant to $a_1, a_2, a_3, c$.  Any point of $\mathcal{S}'$ can be expressed as $(\alpha_1t + qx_1, \alpha_2t + qy_1, \alpha_3t + qz_1, \alpha_4t + qw_1, k)$, and for such a point to be at distance $\sqrt{r}$ from each of $a_1, a_2, a_3, c$, it must satisfy the following Diophantine equation.

\begin{equation} 
(\alpha_1t + qx_1)^2 + (\alpha_2t + qy_1)^2 + (\alpha_3t + qz_1)^2 + (\alpha_4t + qw_1)^2 + k^2 = r
\label{eq33}
\end{equation}

Setting $t=0$, we obtain $k^2 = r - q^2r_1$.  So, if $r - q^2r_1$ happens to be a non-zero rational square, we have a non-trivial rational solution for Equation \ref{eq33}, and Lemma \ref{rationalsolution} then indicates that there are infinitely many such solutions.  Just as before, we can use three of those solutions to place $b_1, b_2, b_3$ and complete our embedding of $G$ in $\mathbb{Q}^5$.  

All that needs to be done to ensure that this embedding process is successful is to select $r_1 \in \mathbb{Q}^+$ so that both of the following hold:
\begin{enumerate}
\item[$(i)$] $\sqrt{r - q^2r_1} \in \mathbb{Q}^+$.
\item[$(ii)$] $\frac{r - r_1}{r_1}$ is a sum of three rational squares.
\end{enumerate}

Our plan is as follows.  Let $r_1 = (\frac{a}{b})r$ where $a,b \in \mathbb{Z}^+$ and $a < b$.  After some simplification, we have that $(i)$ holds -- that is, $r - q^2r_1$ is a rational square -- if and only if $r(\frac{4a - b}{a})$ is a rational square.  Regarding $(ii)$, and after simplifying and then applying Gauss's condition concerning integers that are the sum of three squares, we find that $\frac{r - r_1}{r_1}$ is a sum of three rational squares if and only if $ab - a^2$ is a sum of three integer squares.  We now consider two cases.

If $r$ is even, let $a$ be some sufficiently large odd square and let $4a - b = r$.  Clearly, this selection of $a,b$ satisfies $(i)$.  Since by assumption $r$ is square-free, we have $b \equiv 2 \pmod 4$.  As any odd square is congruent to 1 modulo 4, we then have $ab - a^2 \equiv 1 \pmod 4$.  Thus $(ii)$ holds as well.

Now suppose $r$ is odd.  If $r \equiv 1 \pmod 4$, let $a = rd^2$ where $d$ is some sufficiently large odd integer.  Select $b$ so that $4a - b$ is an odd square, and note that this gives $b \equiv 3 \pmod 4$.  Clearly, $(i)$ holds.  Since $a \equiv 1 \pmod 4$, we then have $ab - a^2 \equiv 2 \pmod 4$ and $(ii)$ holds.  If $r \equiv 3 \pmod 4$, let $a = rd^2$ where $d$ is some sufficiently large even integer.  Again, select $b$ so that $4a - b$ is an odd square, which allows $(i)$ to hold and results in $b \equiv 3 \pmod 4$.  Here, $ab - a^2 = d^2(rb - r^2d^2)$, and in accordance with Gauss's condition, we have $d^2(rb - r^2d^2)$ being representable as a sum of three integer squares if and only if $rb - r^2d^2$ is representable as a sum of three integer squares.  Since $r,b$ are both congruent to 3 modulo 4 and $d$ is even, we have $ rb - r^2d^2 \equiv 1 \pmod 4$, and $(ii)$ holds.\qed

\end{proof}

\section{Concluding Thoughts}

As a reader progresses through the previous section, it would be natural to ask if the proof of Theorem \ref{root2} can be applied in a general setting.  Unfortunately, it cannot, as the proof was reliant on the existence of $K_{n}$ appearing as a subgraph of $G(\mathbb{Q}^n, \sqrt2)$.  As it turns out, there exist specific selections of $n,d$ in which the largest $m$ such that $K_m$ is a subgraph of $G(\mathbb{Q}^n, d)$ is only $m = n-3$.  In this section, we expound upon this observation, and along the way, resolve a question posed in \cite{bau}.  We then give a few thoughts on how a future resolution of Question \ref{mainquestion} may perhaps proceed.

As defined in Section 1, let $\omega(G)$ denote the clique number of $G$.  For $S^n \subseteq \mathbb{R}^n$, the \textit{clique number} $C_1(S^n)$ and \textit{lower clique number} $c_1(S^n)$ are defined in \cite{bau}, where $C_1(S^n) = \max\{\omega(S^n,d): d > 0\}$ and $c_1(S^n) = \min\{\omega(S^n,d): d > 0 \text{ and } d \text{ is actually realized as a distance between points of } S^n\}$.  The proof of the ``only if" direction of Lemma \ref{baulemma} consisted of showing that for $n \in \mathbb{Z}^+$ with $n \geq 3$ and $n \not \equiv 0 \pmod 4$, it is the case that $C_1(\mathbb{Q}^n) \neq c_1(\mathbb{Q}^n)$.  The question was then posed in \cite{bau} as to how far apart $C_1(\mathbb{Q}^n)$ and $c_1(\mathbb{Q}^n)$ can possibly be, with an example given where $C_1(\mathbb{Q}^n) - c_1(\mathbb{Q}^n) = 3$.  A well-known result of Schoenberg \cite{schoenberg} determines, for each $n$, the largest regular simplex that can be drawn with vertices in $\mathbb{Z}^n$.  Since $C_1(\mathbb{Z}^n) = C_1(\mathbb{Q}^n)$ due to a scaling argument, we will present Schoenberg's result using our current notation as Lemma \ref{restatedsch}.  

\begin{lemma} \label{restatedsch} For a positive integer $n$, $C_1(\mathbb{Q}^n) = n + 1$ in the following cases: 
\begin{enumerate}
\item[(i)] $n$ is even and $n+1$ is a perfect square.
\item[(ii)] $n \equiv 3 \pmod 4$.
\item[(iii)] $n \equiv 1 \pmod 4$ and $n+1$ is the sum of two squares.
\end{enumerate}
Otherwise, $C_1(\mathbb{Q}^n) = n$. 
\end{lemma}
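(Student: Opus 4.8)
The plan is to show that the existence of a regular simplex on $m+1$ points in $\mathbb{Q}^m$ (equivalently, in $\mathbb{Z}^m$ after scaling) is governed by a classical number-theoretic criterion, and then to read off the three stated cases. The starting observation is a reformulation in terms of matrices: $m+1$ points spanning a regular simplex of squared edge-length $N$ in $\mathbb{Z}^m$ exist if and only if the $(m+1)\times(m+1)$ matrix $M = \tfrac{N}{2}(I + J)$ — or rather the Gram matrix of the edge vectors, which is $\tfrac{N}{2}(I+J_m)$ of size $m$ — is realized as $A^\top A$ for an integer matrix $A$. More usefully, Schoenberg's argument (and the cleaner later treatments) rephrases this via the fact that a regular $(m+1)$-simplex embeds in $\mathbb{Z}^m$ iff a certain quadratic form represents an appropriate value, which ultimately reduces to asking whether $-1$, or a specified integer depending on $m \bmod 4$, lies in the set of sums of squares / is a norm from the relevant quadratic space. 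I would cite \cite{schoenberg} for the underlying computation and simply restate its conclusion; since the lemma is explicitly a restatement, the honest approach is: (1) recall Schoenberg's theorem in its original form, (2) verify that the conditions there translate, case by case on the residue of $n$ modulo $4$ and the parity of $n$, into conditions (i)–(iii), and (3) invoke $C_1(\mathbb{Z}^n) = C_1(\mathbb{Q}^n)$ (scaling) to transfer the statement to $\mathbb{Q}^n$.

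The case analysis is where the content sits. Writing $N$ for the squared edge-length, one shows a regular $(n+1)$-simplex with squared edge-length $N$ sits in $\mathbb{Z}^n$ iff both $N(n+1)$ is a perfect square \emph{and} a companion congruence/representability condition holds; minimizing over $N$ (we are free to choose $N$, since $C_1$ takes a max over all distances) then leaves exactly the stated constraints. For $n$ even: $N(n+1)$ a square forces, after removing square factors, that $n+1$ itself must be a square (taking $N$ a square), giving case (i); when $n+1$ is not a square one can still fit $n$ points (a regular $n$-simplex, i.e. $n+1$ vertices is impossible but $n$ vertices is always fine via the standard construction scaled appropriately), yielding $C_1 = n$. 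For $n \equiv 3 \pmod 4$: here the relevant obstruction vanishes identically — one can always realize the simplex (for a suitable $N$, e.g. related to the fact that $n+1 \equiv 0 \pmod 4$ and $4$ is a sum of squares in a way that makes the Gram matrix realizable over $\mathbb{Z}$) — so $C_1 = n+1$ unconditionally, case (ii). For $n \equiv 1 \pmod 4$: the obstruction is exactly that $n+1$ (which is $\equiv 2 \pmod 4$) be a sum of two squares, i.e. have no prime $\equiv 3 \pmod 4$ to an odd power, giving case (iii); otherwise $C_1 = n$. For $n \equiv 0 \pmod 4$ with $n+1$ not a square, and $n \equiv 1$ or $2 \pmod 4$ failing the respective condition, one falls to $C_1 = n$, and one must also check the lower bound $C_1(\mathbb{Q}^n) \ge n$ always holds — this is immediate since a regular $n$-simplex ($n$ vertices, hence $K_n$) with squared edge $2$ embeds in $\mathbb{Z}^n$ using the points $0, e_i - e_1$ is not quite right dimensionally, so instead use the standard embedding of $K_n$ via $n$ of the $n+1$ points $e_1, \dots, e_{n+1}$ in the hyperplane, rescaled, which lands in $\mathbb{Q}^{n}$.

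The main obstacle I anticipate is not any single computation but rather faithfully matching Schoenberg's original formulation — which is typically stated in terms of when $n+1$ "has a certain factorization" or when a Hurwitz-type matrix equation is solvable — to the clean trichotomy (i)–(iii), making sure no case is dropped and that the "otherwise $C_1(\mathbb{Q}^n) = n$" clause is fully justified (both the upper bound, from the failure of the representability condition at the top value $n+1$, and the lower bound, from an explicit $K_n$ construction valid for every $n$). A secondary point requiring care: $C_1$ is a maximum over all realized distances $d$, so in each case one must exhibit a \emph{specific} good $d$ (equivalently squared edge-length $N \in \mathbb{Q}^+$) achieving the bound, rather than merely showing some $d$ fails; the constructions of suitable $N$ in cases (i)–(iii) are where Schoenberg's explicit simplices are invoked.
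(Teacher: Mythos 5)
Your proposal is correct and matches the paper's treatment: the paper offers no independent proof of this lemma, presenting it exactly as you do --- as a restatement of Schoenberg's theorem \cite{schoenberg} on regular simplices with vertices in $\mathbb{Z}^n$, transferred to $\mathbb{Q}^n$ by the scaling observation $C_1(\mathbb{Z}^n) = C_1(\mathbb{Q}^n)$, with the lower bound $C_1(\mathbb{Q}^n) \geq n$ coming from the standard basis vectors ($K_n$ at distance $\sqrt{2}$). Your added case-by-case translation sketch is consistent with Schoenberg's original formulation, so no gap is introduced.
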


With access to Lemma \ref{restatedsch}, determining the maximum possible difference $C_1(\mathbb{Q}^n) - c_1(\mathbb{Q}^n)$ is just an exercise in bounding $c_1(\mathbb{Q}^n)$ alone.  This seems particularly relevant to our current work, as any graph $G$ we hope to use in answering Question \ref{mainquestion} must have $\omega(G) \leq c_1(G)$.  

\begin{theorem} \label{bauanswer} For any positive integer $n$, $C_1(\mathbb{Q}^n) - c_1(\mathbb{Q}^n) \leq 3$.  
\end{theorem}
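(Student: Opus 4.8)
The plan is to combine Schoenberg's upper bound $C_1(\mathbb{Q}^n)\le n+1$ (Lemma \ref{restatedsch}) with a matching lower bound $c_1(\mathbb{Q}^n)\ge n-2$ valid for every $n$. For $n\le 3$ the lower bound is immediate: any distance realized between points of $\mathbb{Q}^n$ is realized by two points, so $\omega(\mathbb{Q}^n,d)\ge 2\ge n-2$ and hence $c_1(\mathbb{Q}^n)\ge 2$. So assume $n\ge 4$. Then, by Lagrange's four-square theorem together with scaling, every positive rational is a sum of $n$ rational squares, so the distances realized between points of $\mathbb{Q}^n$ are precisely the numbers $\sqrt r$ with $r\in\mathbb{Q}^+$. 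It therefore suffices to show that for each $r\in\mathbb{Q}^+$ the graph $G(\mathbb{Q}^n,\sqrt r)$ contains $K_{n-2}$, that is, that $\mathbb{Q}^n$ contains $n-2$ points forming a regular simplex of edge length $\sqrt r$.

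To build such a simplex, put one vertex at the origin and look for the remaining $n-3$ vertices as vectors $v_1,\dots,v_{n-3}\in\mathbb{Q}^n$ with $\langle v_i,v_i\rangle=r$ and $\langle v_i,v_j\rangle=r/2$ for $i\neq j$; then all pairwise distances among the $v_i$, and all distances from the $v_i$ to the origin, equal $\sqrt r$. Equivalently, one needs a rational $(n-3)\times n$ matrix $V$ with $VV^{T}=M$, where $M=\tfrac r2\,(I_{n-3}+J_{n-3})$ and $I$, $J$ denote the identity and all-ones matrices. A one-line eigenvalue computation shows $M$ is positive definite (eigenvalues $\tfrac r2$ and $\tfrac r2(n-2)$) of rank $n-3$, and it is rational and symmetric. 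Producing $V$ is exactly the problem of writing the positive-definite rational quadratic form with Gram matrix $M$ as a sum of $n$ squares of rational linear forms in $n-3$ variables; the rows of such a $V$ are then the desired $v_i$, and one vertex at the origin completes the $K_{n-2}$.

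The crux is the classical fact that \emph{every positive-definite rational quadratic form of rank $k$ can be written as a sum of $k+3$ squares of rational linear forms}; applied to $M$ (rank $n-3$) this produces a sum of exactly $n$ squares, which is precisely the budget $\mathbb{Q}^n$ affords. I expect this to be the main obstacle, and would either cite it or prove it directly: it reduces to showing that the form $N\langle 1\rangle$ (a sum of $N$ squares) represents every positive-definite rational form $\phi$ of rank $N-3$. This representation holds over $\mathbb{R}$ for signature reasons, and over each $\mathbb{Q}_p$ because a complement $\psi$ with $\phi\perp\psi\cong N\langle 1\rangle$ would have rank $\ge 3$, and over $\mathbb{Q}_p$ forms of rank $\ge 3$ realize every discriminant and Hasse invariant, so $\psi$ can be chosen to make the orthogonal sum come out right; the version of Hasse--Minkowski valid when the difference of ranks is at least $3$ then upgrades these local representations to one over $\mathbb{Q}$.

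Everything else is assembly. For $n\ge 4$ the construction gives $\omega(\mathbb{Q}^n,\sqrt r)\ge n-2$ for every $r\in\mathbb{Q}^+$, hence $c_1(\mathbb{Q}^n)\ge n-2$; for $n\le 3$ the trivial estimate gives the same. Combining with $C_1(\mathbb{Q}^n)\le n+1$ yields $C_1(\mathbb{Q}^n)-c_1(\mathbb{Q}^n)\le 3$. I would also remark that the ``$+3$'' in the quadratic-forms lemma is exactly what forces this bound, and that it is attained, matching the difference-$3$ example noted from \cite{bau}.
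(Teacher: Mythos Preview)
Your proof is correct, but it follows a genuinely different route from the paper's own argument. The paper does not invoke the general theory of rational quadratic forms at all. Instead, it writes $n=4m+k$ with $k\in\{0,1,2,3\}$ and uses Lemma~\ref{baulemma} (the fact that all $G(\mathbb{Q}^{4m},d)$ are isomorphic) to get $c_1(\mathbb{Q}^{4m})=C_1(\mathbb{Q}^{4m})\ge 4m$, hence $c_1(\mathbb{Q}^n)\ge 4m$. Combined with $C_1(\mathbb{Q}^n)\le n+1$ this already gives a gap of at most $4$, with equality possible only when $n\equiv 3\pmod 4$ and $c_1(\mathbb{Q}^{4m+3})=4m$. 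The paper then rules out that single case by an explicit Diophantine computation: starting from a $K_{4m}$ embedded in the first $4m$ coordinates, it shows (via a chain of substitutions ending in Gauss's three–square criterion) that one can always add a $(4m{+}1)$-st point in $\mathbb{Q}^{4m+3}$ at the correct distance.

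Your argument instead proves the uniform bound $c_1(\mathbb{Q}^n)\ge n-2$ in one stroke, by realizing the Gram matrix $\tfrac{r}{2}(I_{n-3}+J_{n-3})$ as $VV^{T}$ with $V$ rational of width $n$; the key input is the classical fact (a consequence of Hasse--Minkowski and the $p$-adic classification of forms of rank $\ge 3$) that every positive-definite rational form of rank $k$ is a subform of $\langle 1\rangle^{k+3}$. This is cleaner and more conceptual, and it makes transparent why the constant $3$ appears. The trade-off is that you are importing the full Hasse--Minkowski machinery, whereas the paper stays at the level of Gauss's three-square theorem and the earlier results from \cite{bau}. If you present your version, it would be worth either citing the ``$k{+}3$ squares'' theorem explicitly (it goes back to Ko and Mordell) or tightening the sketch: the codimension-$3$ hypothesis is used to guarantee that a local complement of rank $3$ with the required discriminant and Hasse invariant exists at every $p$, after which the product formula and ordinary Hasse--Minkowski for equivalence produce the global complement.
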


\begin{proof} For $n \leq 3$, we have $C_1(\mathbb{Q}^n) \leq 4$ and $c_1(\mathbb{Q}^n) \geq 2$, so we may assume $n \geq 4$.  Write $n = 4m + k$ for some $k \in \{0,1,2,3\}$.  In the case of $k = 0$, Lemma \ref{baulemma} gives $C_1(\mathbb{Q}^n) = c_1(\mathbb{Q}^n)$, and so by applying Lemma \ref{restatedsch}, we see that $C_1(\mathbb{Q}^n) - c_1(\mathbb{Q}^n) \leq 4$ for all $n$, and the only situation which could possibly result in $C_1(\mathbb{Q}^n) - c_1(\mathbb{Q}^n) = 4$ is when $\omega(\mathbb{Q}^{4m+3}, d) = 4m$ for some $d > 0$.  Without loss of generality, we may assume $d = \sqrt{r}$ for some square-free positive integer $r$.  

Let $P_1, \ldots, P_{4m} \in \mathbb{Q}^{4m}$ such that $|P_i - P_j| = \sqrt{r}$ for each $i \neq j$.  In other words, the points $P_1, \ldots, P_{4m}$ form the vertices of a copy of the complete graph $K_{4m}$ appearing as a subgraph of $G(\mathbb{Q}^{4m}, \sqrt{r})$.  For each $i \in \{1, \ldots, 4m\}$, extend $P_i$ to a point $P_i' \in \mathbb{Q}^{4m+3}$ by placing zeroes as its last three coordinate entries.  Suppose $P \in \mathbb{Q}^{4m+3}$ is a point simultaneously at a distance $\sqrt{r}$ from each of $P_1', \ldots, P_{4m}'$.  One can imagine, especially after sampling the arguments in the previous section, that the existence or non-existence of such a point $P$ can be determined through analysis of a corresponding Diophantine equation.  Indeed, that is the case, as it was shown in \cite{bau} (specifically, Theorem 3.6 of \cite{bau}) that the existence or non-existence of $P$ can be decided by determining the solvability or insolvability, respectively, in rationals, of the Diophantine equation given below.

\begin{equation} 
\frac{r(4m-1)}{8m} + 2rmx^2 + y^2 + z^2 + w^2 = r
\label{eq41}
\end{equation}

Unfortunately, the authors of \cite{bau} failed to realize that Equation \ref{eq41} is solvable for all selections of $r,m \in \mathbb{Z}^+$.  We will show this by first rewriting it as the following Equation \ref{eq42}.

\begin{equation} 
y^2 + z^2 + w^2 = \left(\frac{4rm + r}{8m}\right) - 2rmx^2 
\label{eq42}
\end{equation}

\noindent We now move to homogeneous coordinates in Equation \ref{eq43}, which has a non-trivial solution in integers if and only if Equation \ref{eq42} is solvable in rationals.

\begin{equation} 
y^2 + z^2 + w^2 = \left(\frac{4rm + r}{8m}\right)t^2 - 2rmx^2 
\label{eq43}
\end{equation}

\noindent Set $t = 4mt_0$, and note that Equation \ref{eq43} is solvable if and only if Equation \ref{eq44} is.

\begin{equation} 
y^2 + z^2 + w^2 = 2rm[(4m + 1){t_0}^2 - x^2] 
\label{eq44}
\end{equation} 

\noindent Write $2rm = s\alpha^2$ where $s$ is square-free and let $t_0 = \frac{t_1}{\alpha}$, $x = \frac{x_1}{\alpha}$, and note that Equation \ref{eq44} is solvable if and only if Equation \ref{eq45} is.

\begin{equation} 
y^2 + z^2 + w^2 = s[(4m + 1){t_1}^2 - {x_1}^2] 
\label{eq45}
\end{equation}

Denote by $\gamma$ the right-hand side of this equation.  By Gauss' characterization of integers representable as a sum of three squares, to guarantee that Equation \ref{eq45} is solvable, we just need to select integers $t_1, x_1$ so that $\gamma$ is not of the form $4^k(8a+7)$ for non-negative integers $a,k$.  This can be done as follows.  If $s \equiv 3 \pmod 4$, select $t_1 \equiv 0 \pmod 4$ and $x_1 \equiv 1 \pmod 4$.  This implies $\gamma \equiv 1 \pmod 4$.  If $s \equiv 1 \text{ or } 2 \pmod 4$, select $t_1 \equiv 1 \pmod 4$ and $x_1 \equiv 0 \pmod 4$.  This results in $\gamma \equiv 1 \pmod 4$ or $\gamma \equiv 2 \pmod 4$, respectively.  In each of these cases, $\gamma$ is representable as a sum of three squares, and we have Equation \ref{eq45} being solvable.  This concludes the proof of the theorem.\qed
\end{proof}

So, in a future attack on Question \ref{mainquestion}, one could view Theorem \ref{bauanswer} as a starting guideline for separating those graphs $G$ which merit consideration as being a subgraph of $G(\Q^n, d)$ from graphs which can immediately be set aside.  We also are led to believe that Lemma \ref{maeharalemma} may ultimately point the way for all $n > 5$.  For $n \geq 6$ with $n \equiv 2 \pmod 4$, the complete multipartite graph $G$ having $\frac{n}{2}$ parts of size 3 has $\dim(G) = n$ and is not in conflict with Theorem \ref{bauanswer}.  We leave it open as to whether $G$ is a subgraph of $G(\Q^n, d)$ for all relevant $d$.  For $n > 5$ with $n$ odd, perhaps a case could be made for the complete multipartite graph having $\frac{n-1}{2}$ parts of size 3 and one part of size 1.



\begin{thebibliography}{99}

\bibitem{alon} N. Alon and A. Kupavskii, Two notions of unit distance graphs, \textit{J. Combin. Theory Ser. A} \textbf{125} (1) (2014), 1 -- 17. 

\bibitem{bau} Sheng Bau, Peter Johnson, and Matt Noble, On single-distance graphs on the rational points in Euclidean spaces, \textit{Can. Math. Bull.} (2020), 1 -- 12, doi:10.4153/S0008439520000181.

\bibitem{beeson} M. J. Beeson, Triangles with vertices on lattice points, \textit{Amer. Math. Monthly} \textbf{99} (1992), 243 -- 252.



\bibitem{degrey} A. D. N. J. de Grey, The chromatic number of the plane is at least 5, \textit{Geombinatorics} \textbf{28} (1) (2018), 18 -- 31.
 
\bibitem{eht}  P. Erd\H{o}s, F. Harary, and W. T. Tutte, On the dimension of a graph, \textit{Mathematika} \textbf{12} (1965), 118 -- 122.

\bibitem{simonovits} P. Erd\H{o}s and M. Simonovits, On the chromatic number of geometric graphs, \textit{Ars Combin.} \textbf{9} (1980), 229 -- 246.

\bibitem{exoo} G. Exoo and D. Ismailescu, The chromatic number of the plane is at least 5: A new proof, \textit{Discrete Comput. Geom.} (2019), https://doi.org/10.1007/s00454-019-00058-1 

\bibitem{heule} M. J. H. Heule, Computing small unit-distance graphs with chromatic number 5, \textit{Geombinatorics} \textbf{28} (1) (2018), 32 -- 50.

\bibitem{maehara} H. Maehara, On Euclidean dimension of a complete multipartite graph, \textit{Discrete Math.} \textbf{72} (1988), 285 -- 289.

\bibitem{nagell} T. Nagell, \textit{Introduction to Number Theory}, John Wiley \& Sons, Inc., New York, 1951.

\bibitem{schoenberg} I. J. Schoenberg, Regular simplices and quadratic forms, \textit{J. London Math. Soc.} \textbf{12} (1937), 48 -- 55.

\bibitem{soifer} A. Soifer, \textit{The Mathematical Coloring Book}, Springer, 2009.








\end{thebibliography}
\end{document}